\newcommand{\bx}{\mbox{\boldmath{$x$}}}
\newcommand{\btau}{\mbox{\boldmath{$\tau$}}}
\newcommand{\bzero}{\mbox{\boldmath{$0$}}}
\newcommand{\bI}{\mbox{\boldmath{$I$}}}
\newcommand{\fb}{\mbox{\boldmath{$f$}}}
\newcommand{\bu}{\mbox{\boldmath{$u$}}}
\newcommand{\bv}{\mbox{\boldmath{$v$}}}
\newcommand{\bw}{\mbox{\boldmath{$w$}}}
\newcommand{\bvarepsilon}{\mbox{\boldmath{$\varepsilon$}}}
\newcommand{\bsigma}{\mbox{\boldmath{$\sigma$}}}
\newcommand{\bnu}{\mbox{\boldmath{$\nu$}}}
\newcommand{\bxi}{\mbox{\boldmath{$\xi$}}}
\newtheorem{theorem}{Theorem}[section]
\newtheorem{lemma}[theorem]{Lemma}
\newtheorem{corollary}[theorem]{Corollary}
\newtheorem{problem}[theorem]{Problem}
\numberwithin{equation}{section}
\newenvironment{proof}[1][Proof]{\textbf{#1.} }
{\ \rule{0.75em}{0.75em}\smallskip}
\begin{document}

\begin{center}
\Large\bf Numerical Studies of a Hemivariational Inequality for a Viscoelastic Contact Problem with Damage
\end{center}

\smallskip
\begin{center}
{\large Weimin Han}\footnote{Program in Applied Mathematical and Computational Sciences (AMCS) \&
Department of Mathematics, University of Iowa, Iowa City, IA 52242, USA.
Email: {\tt weimin-han@uiowa.edu}},\quad
{\large Michal Jureczka}\footnote{Jagiellonian University in Krakow, Faculty of Mathematics and Computer Science,
Lojasiewicza 6, 30-348 Krakow, Poland. Email: {\tt michal.jureczka@uj.edu.pl}}\quad {\large and} \quad
{\large Anna Ochal}\footnote{Jagiellonian University in Krakow, Faculty of Mathematics and Computer Science,
Lojasiewicza 6, 30-348 Krakow, Poland. Email: {\tt ochal@ii.uj.edu.pl}}
\end{center}

\medskip
\begin{quote}
{\bf Abstract.} \ This paper is devoted to the study of a hemivariational inequality modeling the 
quasistatic bilateral frictional contact between a viscoelastic body and a rigid foundation.  
The damage effect is built into the model through a parabolic differential inclusion for the 
damage function.  A solution existence and uniqueness result is presented.  A fully discrete 
scheme is introduced with the time derivative of the damage function approximated by the 
backward finite different and the spatial derivatives approximated by finite elements.  An optimal 
order error estimate is derived for the fully discrete scheme when linear elements are used for the 
velocity and displacement variables, and piecewise constants are used for the damage function.  
Simulation results on numerical examples are reported illustrating the performance of the 
fully discrete scheme and the theoretically predicted convergence orders.
\end{quote}

\smallskip
{\bf Keywords.} \ Quasistatic contact, viscoelastic material, damage, hemivariational inequality,
fully discrete scheme, convergence, optimal order error estimate.

\smallskip
{\bf Mathematical Subject Classification (2010).} \ 65N30, 65M06, 47J20, 74M10, 74M15.

\smallskip

\section{Introduction}

In this paper, we study a mathematical model in the form of a hemivariational inequality for a 
quasistatic bilateral frictional contact problem between a viscoelastic body and a rigid 
foundation.  The friction law is given in the form of subdifferential condition.  Damage of the 
material is incorporated.  Modeling, variational analysis and numerical solution of contact problems
have been studied extensively; in this regard, a few comprehensive references are 
\cite{KO1988, HHNL1988, HS2002, SHS2006} in the context of variational inequalities (VIs), and
\cite{MOS2013, SM2018} in the context of hemivariational inequalities (HVIs).  

The notion of hemivariational inequalities (HVIs) was introduced in early 1980s to model mechanical 
problems involving non-smooth, non-monotone or multi-valued relations (\cite{Pa83}).  Early results on 
modeling, mathematical analysis and engineering applications of HVIs are summarized in \cite{Pa1993, NP1995};
recent summarized accounts include \cite{CLM2007, MOS2013, SM2018}.
Since there are no solution formulas for HVIs in applications, numerical simulation is the only 
feasible approach to solving HVIs. Detailed discussion of the finite element method for solving HVIs can be found in
\cite{HMP1999}.  More recently, there has been substantial progress in numerical analysis of HVIs,
especially on optimal order error estimates for numerical solutions of HVIs, starting with the 
paper \cite{HMS14}, followed by a sequence of papers, e.g., \cite{BBHJ15, HSB17, Ha18, HSD18}; 
the reader is referred to \cite{HS19AN} for a recent survey.  

Many contact processes are accompanied with material damage.  In applications, it is very important
to consider the damage effect.  General mathematical models for damage were derived in \cite{FN95, FN96};
see also \cite{Fr2002}.  In \cite{HSS01}, a quasistatic contact problem for a viscoelastic material 
is studied variationally and numerically, where the damage effect of the viscoelastic material
is taken into account.  Systematic variational analysis
and numerical analysis of contact problems with damage effect is summarized in \cite{SHS2006}.  
The mathematical problems investigated in these references are in the form of VIs.  For studies of 
contact problems with damage in the form of HVIs, the reader is referred to \cite{GOS15}.

This is the first paper devoted to numerical analysis of an HVI arising in a contact problem with damage.
The rest of the paper is organized as follows.  In Section \ref{sec:model}, we introduce the 
contact problem, present its weak formulation as an HVI and comment on the solution existence and uniqueness.
In Section \ref{sec:NA}, we consider a fully discrete numerical scheme for the contact problem
and derive an optimal order error estimate under appropriate solution regularity assumptions.
In Section \ref{sec:ex}, we report computer simulation results on a numerical example and illustrate 
numerical convergence orders that match the theoretical error bound.

\section{The contact problem}\label{sec:model}

We first introduce the pointwise formulation of the quasistatic contact problem for the contact 
between a viscoelastic body and a rigid foundation.  The initial configuration of the body is
$\Omega$, a Lipschitz bounded domain in $\mathbb{R}^d$ ($d\le 3$ in applications).  The body is 
subject to the action of volume forces of a total density $\fb_0$.  The boundary $\Gamma$
of the domain $\Omega$ is split into three disjoint measurable parts, $\Gamma=\Gamma_1\cup\Gamma_2\cup\Gamma_3$
such that $\Gamma_1$ is non-trivial.  We will assume the body is fixed along $\Gamma_1$, 
is subject to the action of surface tractions with a total density $\fb_2$.  Along the contact boundary 
$\Gamma_3$, the body and the foundation are in bilateral contact and the frictional process is
described by a generalized subdifferential inclusion.

Following \cite{CS1982,HS2002}, we consider a viscoelastic constitutive law with the damage effect in the form
\begin{equation*}
 \bsigma={\cal A}\bvarepsilon(\dot{\bu})+{\cal B}(\bvarepsilon(\bu),\zeta),
\end{equation*}
where $\bu$ is the displacement field, $\zeta$ is the damage function, $\bsigma$ is the stress field,
${\cal A}$ and ${\cal B}$ are the viscosity operator and the elasticity operator.  These operators
are allowed to depend on the spatial location.  For convenience, we use the shorthand notation
${\cal A}\bvarepsilon(\dot{\bu})$ and ${\cal B}(\bvarepsilon(\bu),\zeta)$ for ${\cal A}(\bx,\bvarepsilon(\dot{\bu}))$ and
${\cal B}(\bx,\bvarepsilon(\bu),\zeta)$, respectively.  The symbol $\dot{\bu}$ denotes the time derivative of $\bu$.
The time interval of interest is $[0,T]$ for some $T>0$.  

The notion of the damage function was introduced in \cite{FN95, FN96} to quantify the damage to the 
material.  It is defined to be the ratio between the elastic modulus of the damaged material and that
of the original material.  The value of the damage function $\zeta$ lies in $[0,1]$.  
The value $\zeta=1$ indicates that there is no damage in the material, whereas the value $\zeta=0$ 
corresponds to a completely damaged material.  When $0<\zeta<1$, there is a partial damage and the system 
has a reduced load carrying capacity.  A popular model for the evolution of the damage function is 
given by a parabolic differential inclusion:
\[ \dot \zeta - \kappa\, \triangle\zeta + \partial I_{[0,1]}(\zeta) \ni\phi(\bvarepsilon(\bu), \zeta), \]
where $\kappa>0$ is a constant microcrack diffusion coefficient, $I_{[0,1]}$ is the indicator function of 
the interval $[0,1]$, $\partial I_{[0,1]}$ is the convex subdifferential of $I_{[0,1]}$, and $\phi$ is
the mechanical source of damage, depending on the strain and the damage itself.  On the boundary $\Gamma$,
a homogeneous Neumann condition is described for $\zeta$.

For a vector $\bv$ defined on $\Gamma$, we let $v_\nu=\bv\cdot\bnu$ be its normal component, and 
let $\bv_\tau=\bv-v_\nu \bnu$ be its tangential component.  For a stress tensor $\bsigma$ defined on $\Gamma$, 
we let $\sigma_\nu=(\bsigma\bnu)\cdot\bnu$ and $\bsigma_\tau=\bsigma\bnu-\sigma_\nu \bnu$ be its normal 
and tangential components, respectively. 

Denote by $\bu_0$ and $\zeta_0$ the initial values of the displacement and the damage function.
The pointwise formulation of the contact problem is as follows.

\begin{problem}\label{P0}
Find a displacement field $\bu:\Omega\times[0,T]\to \mathbb{R}^d$, a stress field 
$\bsigma:\Omega\times[0,T]\to\mathbb{S}^d$, and a damage field $\zeta: \Omega\times[0,T] \to\mathbb{R}$ such that
\begin{align}
\bsigma & ={\cal A}\bvarepsilon(\dot{\bu}) + {\cal B}(\bvarepsilon(\bu), \zeta) & {\rm in}\ 
\Omega\times(0,T), \label{eq:11}\\[2pt]
\dot \zeta - \kappa\, \triangle\zeta + \partial I_{[0,1]}(\zeta) &\ni\phi(\bvarepsilon(\bu), \zeta)
&{\rm in}\ \Omega\times(0,T),\label{eq:12}\\[2pt]
{\rm Div}\,\bsigma + {\fb}_0& =\bzero &{\rm in}\ \Omega\times(0,T), \label{eq:13}\\[2pt]
\frac{\partial \zeta}{\partial\nu}& =0  &{\rm on}\ \Gamma\times(0,T), \label{eq:14}\\[2pt]
\bu& =\bzero  &{\rm on}\ \Gamma_1\times(0,T),\label{eq:15}\\[2pt]
\bsigma\bnu& ={\fb}_2  &{\rm on}\ \Gamma_2\times(0,T),\label{eq:16}\\
u_\nu& =0,\ -\bsigma_{\tau}\in \partial j(\dot\bu_{\tau}) & {\rm on}\ \Gamma_3\times (0,T),\label{eq:17}\\
\bu(0) & =  \bu_0,\ \zeta(0)=\zeta_0 &{\rm in}\ \Omega.\label{eq:18}
\end{align}
\end{problem}

We already know that \eqref{eq:11} is the viscoelastic constitutive law with damage, and \eqref{eq:12} is 
the evolution relation for the damage function.  We consider a quasistatic contact process and 
\eqref{eq:13} is the corresponding equilibrium equation.  The initial conditions for the displacement field 
and the damage function are given by \eqref{eq:18}.  The relations \eqref{eq:14}--\eqref{eq:17}
are the boundary condition for the damage function, the displacement boundary condition on $\Gamma_1$, the 
traction boundary condition on $\Gamma_2$, and the bilateral friction contact condition on $\Gamma_3$. 
Here, the friction dissipation pseudopotential $j$ will be assumed to be Lipschitz continuous, and 
$\partial j$ represents the generalized subdifferential in the sense of Clarke (cf.\ \cite{Cl75, Cl1983}).  
We will also need the notion of the generalized directional derivative in the sense of Clarke.
Let $V$ be a Banach space and let $\psi \colon V\to \mathbb{R}$ be a locally Lipschitz continuous functional. 
Recall that the generalized directional derivative of $\psi$ at $u\in V$ in the direction $v\in V$ is
\[ \psi^{0}(u;v):= \limsup_{w\to u,\,\lambda\downarrow 0}\frac{\psi(w +\lambda v)-\psi(w)}{\lambda}\,, \]
whereas the generalized subdifferential of $\psi$ at $u\in V$ is
\[ \partial\psi(u):=\left\{\xi\in V^*\mid\psi^0(u;v)\ge\langle\xi,v\rangle_{V^*\times V}\ \forall\,v\in V\right\}. \]
We note the following properties:
\begin{align}
& \psi^0(u;t\,v)=t\,\psi^0(u;v)\quad\forall\,u,v\in V,\,t\ge 0, \label{int5a}\\
& \psi^0(u;v_1+v_2)\le \psi^0(u; v_1)+\psi^0(u;v_2)\quad\forall\,u,v_1,v_2\in V, \label{int6}\\
& \psi^0(u;v) =\max\left\{ \langle\zeta, v\rangle_{V^*\times V}\mid\zeta\in\partial\psi(u)\right\}\quad\forall\,u,v\in V,
\label{int7}\\
& u_n\to u\ {\rm and}\ v_n\to v\ {\rm in}\ V\quad\Longrightarrow\quad
\limsup_{n\to\infty} \psi^0(u_n;v_n)\le \psi^0(u;v).
\label{int8}
\end{align}

Problem \ref{P0} will be studied in its weak form.  For this purpose, we first need to introduce some 
function spaces.  Let 
\[ Q=L^2(\Omega)^{d\times d}_{\rm sym}, \]
which is a Hilbert space with the inner product
\[ (\bsigma,\btau)_Q=\int_\Omega \sigma_{ij}\tau_{ij} dx,\quad \bsigma,\btau\in Q.\]
This will be the space for stress and strain fields.  The function space for the
displacement field is the Hilbert space 
\[ V=\left\{\bv\in H^1(\Omega)^d\mid \bv=\bzero\ {\rm on}\ \Gamma_1,\ v_\nu=0\ {\rm on}\ \Gamma_3\right\} \]
with the inner product $(\bu,\bv)_V= (\bvarepsilon(\bu), \bvarepsilon(\bv))_Q$ and the associated norm 
$\|\bv\|_V= \|\bvarepsilon(\bv)\|_Q$.   The space for the damage field is $Z=H^1(\Omega)$.  For convenience, 
we let $Z_0=L^2(\Omega)$.  The spaces $Z$ and $Z_0$ are endowed with their canonical inner products and norms.

In the study of the contact problem, we assume that the operator ${\cal A}\colon \Omega\times
\mathbb{S}^d\to \mathbb{S}^d$ satisfies the following conditions:
\begin{equation}
\left. \begin{array}{ll} 
{\rm (a)\  There\ exists}\ L_{\cal A}>0\ {\rm such\ that}\\
   {}\qquad \|{\cal A}(\bx,\bvarepsilon_1)-{\cal A}(\bx,\bvarepsilon_2)\|
      \le L_{\cal A} \|\bvarepsilon_1-\bvarepsilon_2\|
    \quad \forall\,\bvarepsilon_1,\bvarepsilon_2\in \mathbb{S}^d,\ {\rm a.e.}\ \bx\in \Omega.\\
  {\rm (b)\  There\ exists}\ m_{\cal A}>0\ {\rm such\ that}\\
    {}\qquad ({\cal A}(\bx,\bvarepsilon_1)-{\cal A}(\bx,\bvarepsilon_2))
       \cdot(\bvarepsilon_1-\bvarepsilon_2)\ge m_{\cal A}\,\|\bvarepsilon_1-\bvarepsilon_2\|^2\\
      {}\qquad\quad \forall\,\bvarepsilon_1,\bvarepsilon_2 \in \mathbb{S}^d,\ {\rm a.e.}\ \bx\in \Omega.\\
{\rm (c)\ For\ any\ }\bvarepsilon\in \mathbb{S}^d,\ \bx\mapsto
        {\cal A}(\bx,\bvarepsilon)\ {\rm is\ measurable\ on\ }\Omega.\\
{\rm (d)\ The\ mapping\ } \bx\mapsto {\cal A}(\bx,\bzero)\
{\rm belongs\ to}\ Q.
\end{array}\right\}
\label{eq:2}
\end{equation}
Similarly, we assume the operator ${\cal B}\colon \Omega\times\mathbb{S}^d\times\mathbb{R}\to \mathbb{S}^d$
has the following properties:
\begin{equation}
\left. \begin{array}{ll} {\rm (a)\ There\ exists\ }L_{\cal B}>0 {\rm\ such\ that}\\
  {} \qquad  \|{\cal B}(\bx,\bvarepsilon_1,\zeta_1)-{\cal B}(\bx,\bvarepsilon_2,\zeta_2)\|\le
    L_{\cal B}\,(\|\bvarepsilon_1-\bvarepsilon_2\|+|\zeta_1-\zeta_2|) \\
   {} \qquad \quad\forall\,\bvarepsilon_1,\bvarepsilon_2\in
     \mathbb{S}^d,\ \zeta_1,\zeta_2\in\mathbb{R},\ {\rm a.e.}\ \bx\in\Omega.\\
{\rm (b)\ For\ any\ }\bvarepsilon\in \mathbb{S}^d\ {\rm and}\ \zeta\in\mathbb{R},\ \bx\mapsto
   {\cal B}(\bx,\bvarepsilon,\zeta)\ {\rm is\ measurable\ on\ }\Omega.\\
{\rm (c)\ The\ mapping\ } \bx\mapsto {\cal B}(\bx,\bzero,0)\ {\rm belongs\ to}\ {Q}.
   \end{array}\right\}
\label{eq:3}
\end{equation}

As an example of the viscoelastic constitutive law with damage, we consider
\begin{equation}
\bsigma={\cal A}\bvarepsilon(\dot{\bu})+\eta\left(\bvarepsilon(\bu)-{\cal P}_{K(\zeta)}(\bvarepsilon(\bu))\right),
\label{vis1}
\end{equation}
where the viscosity tensor ${\cal A}$ satisfies \eqref{eq:2}, $\eta$ is a positive coefficient, $K(\zeta)$
is a damage dependent elasticity set, which is assumed to be convex and ${\cal P}_{K(\zeta)}$ is the projection operator onto the 
set $K(\zeta)$.  We require the properties $\bzero\in K(\zeta)$ and $\zeta_1\ge \zeta_2$ implies 
$K(\zeta_1)\subset K(\zeta_2)$.  The second property implies that as the damage of the material increases,
i.e., the value of the damage function $\zeta$ decreases, the elasticity convex set expands, and the 
material resembles a purely viscous one.   A concrete example is given by the von Mises convex set
\begin{equation}
K(\zeta)=\left\{\btau\in\mathbb{S}^d \mid \|\btau^D\|\le \zeta\,\sigma_Y\right\},
\label{vis2}
\end{equation}
where $\btau^D=\btau-({\rm tr}\,\btau/d)\,\bI$ is the deviatoric part of $\btau$, and $\sigma_Y>0$ is the 
yield limit of the damage-free material.  Since the projection operator is a contraction, it can be 
verified that ${\cal B}(\bvarepsilon,\zeta)=\eta\left(\bvarepsilon(\bu)-{\cal P}_{K(\zeta)}(\bvarepsilon(\bu))\right)$
satisfies \eqref{eq:3}.

On the damage source function $\phi\colon \Omega\times\mathbb{S}^d\times\mathbb{R}\to \mathbb{R}$,
the assumptions are
\begin{equation}                                           
\left. \begin{array}{ll} {\rm (a)\ There\ exists\ }L_\phi>0 {\rm\ such\ that}\\
  {} \qquad  |\phi(\bx,\bvarepsilon_1,\zeta_1)-\phi(\bx,\bvarepsilon_2,\zeta_2)|
    \le   L_\phi\,(\|\bvarepsilon_1-\bvarepsilon_2\|+|\zeta_1-\zeta_2|) \\
   {} \qquad\quad\forall\,\bvarepsilon_1,\bvarepsilon_2\in\mathbb{S}^d,\ \zeta_1,\zeta_2\in\mathbb{R},
   \ {\rm a.e.}\ \bx\in\Omega.\\
{\rm (b)\ For\ any\ }\bvarepsilon\in \mathbb{S}^d\ {\rm and}\ \zeta\in\mathbb{R},\ \bx\mapsto
   \phi(\bx,\bvarepsilon,\zeta)\ {\rm is\ measurable\ on\ }\Omega.\\
{\rm (c)\ The\ mapping\ } \bx\mapsto \phi(\bx,\bzero,0)\ {\rm belongs\ to}\ L^2(\Omega).
\end{array}\right\}
\label{eq:19}
\end{equation}
On the friction dissipation pseudopotential $j\colon \Gamma_3\times\mathbb{R}^d\to\mathbb{R}$,
\begin{equation}                                           
\left. \begin{array}{ll} {\rm (a)\ } \bx\mapsto j(\bx,\bxi)\ {\rm is\ measurable\ on}\ \Gamma_3\ 
     \forall\,\bxi\in\mathbb{R}^d\\
    {}\qquad {\rm and}\ j(\bx,\bzero)\in L^2(\Gamma_3).\\
    {\rm (b)\ } \bxi\mapsto j(\bx,\bxi)\ {\rm is\ locally\ Lipschitz\ in}\ \mathbb{R}^d,\, {\rm a.e.}\ \bx\in\Gamma_3.\\
    {\rm (c)\ } {\rm There\ exist\ constants}\ c_{0\tau},c_{1\tau}\ge 0\ {\rm such\ that}\\
    {}\qquad \|\partial j(\bx,\bxi)\|\le c_{0\tau}+c_{1\tau}\|\bxi\|\
     \forall\,\bxi\in \mathbb{R}^d,\,{\rm a.e.}\ \bx\in\Gamma_3.\\
    {\rm (d)\ } {\rm There\ is\ a\ constant}\ c_{2\tau}\ge 0\ {\rm such\ that}\\
    {}\qquad j^0(\bx,\bxi_1;\bxi_2-\bxi_1)+j^0(\bx,\bxi_2;\bxi_1-\bxi_2)
      \le c_{2\tau}\|\bxi_1-\bxi_2\|\\
    {}\qquad \quad\forall\,\bxi_1,\bxi_2\in \mathbb{R}^d,\,{\rm a.e.}\ \bx\in\Gamma_3. \end{array} \right\}
\label{eq:19a}
\end{equation}
Moreover, we assume
\begin{equation}
\kappa>0
\label{eq:20}
\end{equation}
on the microcrack diffusion coefficient, 
\begin{equation}
{\fb}_0\in C([0,T];L^2(\Omega)^d), \quad {\fb}_2\in C([0,T]; L^2(\Gamma_2)^d)
\label{eq:21}
\end{equation}
on the densities of forces and tractions,
\begin{equation}
\bu_0\in V,\quad \zeta_0\in K
\label{eq:22}
\end{equation}
on the initial data.  Here $K$ represents the set of admissible damage functions defined by
\begin{equation}
K=\left\{\xi \in Z\mid \xi\in [0,1]\ \mbox{a.e.\ in}\ \Omega \right\}.
\label{eq:23}
\end{equation}

By the Riesz representation theorem, we can define $\fb:[0,T]\to V$ by
\begin{equation}
(\fb(t), \bv)_V=\int_{\Omega}{\fb}_0(t)\cdot \bv\,dx+\int_{\Gamma_2} {\fb}_2(t)\cdot\bv\,da
\quad\forall\,\bv\in V,\ t\in[0,T].  \label{eq:24}
\end{equation}
Then conditions \eqref{eq:21} imply
\begin{equation}       
\fb \in C([0,T];V).  \label{eq:25}
\end{equation}
Let $a: Z\times Z\to\mathbb{R}$ be the bilinear form
\begin{equation}
a(\xi,\eta)=\kappa\,\int_\Omega\,\nabla\xi\cdot\nabla\eta\, dx,\quad \xi,\eta \in Z.   \label{eq:26}
\end{equation}

Let us introduce a weak formulation of the problem \eqref{eq:11}--\eqref{eq:18}. 

\begin{problem} \label{P1}
Find a displacement field $\bu:[0,T]\to V$, a stress field $\bsigma\colon [0,T]\to Q$, and a damage field 
$\zeta\colon [0,T]\to Z$ such that for all $t\in[0,T]$,
\begin{align}
&\bsigma(t)  = {\cal A}\bvarepsilon(\dot{\bu}(t))+{\cal B}(\bvarepsilon(\bu(t)),\zeta(t)),  \label{eq:27}\\[1mm]
&(\bsigma(t),\bvarepsilon(\bv-\dot {\bu}(t)))_Q+\int_{\Gamma_3} j^0(\dot\bu_\tau(t);\bv_\tau-\dot{\bu}_\tau(t))\,da
\geq (\fb(t), \bv-\dot {\bu}(t))_V\quad \forall\,\bv\in V,   \label{eq:28}
\end{align}
\begin{align}
\zeta(t)\in K,\quad &(\dot\zeta(t), \xi -\zeta(t))_{Z_0} +a(\zeta(t),\xi-\zeta(t)) \nonumber\\
& \ge(\phi(\bvarepsilon(\bu(t)),\zeta(t)),\xi-\zeta(t))_{Z_0} \quad\forall\, \xi \in K,  \label{eq:29}
\end{align}
and
\begin{equation}
\bu(0)=\bu_0,\quad \zeta(0)=\zeta_0.   \label{eq:30}
\end{equation}
\end{problem}

Denote by $c_\tau$ the smallest constant in the trace inequality
\begin{equation}
\|\bv_\tau\|_{L^2(\Gamma_3)^d} \le c_\tau \|\bv\|_V\quad\forall\,\bv\in V.
\label{eq:31}
\end{equation}

Similar to \cite[Theorem 5.1]{GOS15}, we can prove the following result.

\begin{theorem}\label{thm:exi}
Assume \eqref{eq:2}--\eqref{eq:22},
\begin{equation}
c_{2\tau}c_\tau^2<m_{\cal A}, 
\label{eq:31a}
\end{equation}
and either $\sqrt{2}\,c_{1\tau} c_\tau^2<m_{\cal A}$ or for a constant $d_\tau\ge 0$, 
$j^0(\bx,\bxi;-\bxi)\le d_\tau\left(1+\|\bxi\|\right)$ for all $\bxi\in\mathbb{R}^d$
and a.e.\ $\bx\in\Omega$.  Then Problem \ref{P1} has a unique solution $\bu\in C^1([0,T];V)$, 
$\bsigma\in C([0,T];Q)$ and $\zeta\in H^1(0,T;L^2(\Omega))\cap L^2(0,T;H^1(\Omega))$.  Moreover,
${\rm Div}\,\bsigma\in L^2(\Omega)^d$.
\end{theorem}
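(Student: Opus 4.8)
The plan is to decouple the system into a fixed-point iteration: given a damage function $\zeta$, solve the contact subproblem \eqref{eq:27}--\eqref{eq:28} for $(\bu_\zeta,\bsigma_\zeta)$; then given the resulting strain field, solve the parabolic variational inequality \eqref{eq:29} for a new damage function; finally show the composite map is a contraction on a suitable space of time-dependent functions. This is exactly the architecture of \cite[Theorem 5.1]{GOS15}, so the bulk of the work is to verify that the present hypotheses \eqref{eq:2}--\eqref{eq:22}, \eqref{eq:31a} and the alternative growth condition on $j^0$ supply the ingredients that argument needs.

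First I would treat the contact subproblem with $\zeta$ frozen. Substituting \eqref{eq:27} into \eqref{eq:28} and recalling $\|\bv\|_V=\|\bvarepsilon(\bv)\|_Q$, the problem at each $t$ is to find $\dot{\bu}(t)\in V$ solving an elliptic HVI of the form $(\mathcal{A}\bvarepsilon(\dot{\bu}),\bvarepsilon(\bv-\dot{\bu}))_Q+\int_{\Gamma_3}j^0(\dot{\bu}_\tau;\bv_\tau-\dot{\bu}_\tau)\,da\ge(\fb-\mathcal{B}\text{-term},\bv-\dot{\bu})_V$. Existence and uniqueness of $\dot{\bu}(t)$ follow from the standard surjectivity/strong-monotonicity theory for HVIs (e.g.\ \cite{MOS2013}): strong monotonicity of $\mathcal{A}$ with constant $m_{\cal A}$ from \eqref{eq:2}(b), the trace inequality \eqref{eq:31}, and the smallness condition \eqref{eq:31a} $c_{2\tau}c_\tau^2<m_{\cal A}$ guarantee coercivity and uniqueness; the growth bound \eqref{eq:19a}(c) or the alternative $j^0(\bx,\bxi;-\bxi)\le d_\tau(1+\|\bxi\|)$ (together with $\sqrt{2}\,c_{1\tau}c_\tau^2<m_{\cal A}$ in the first case) closes the a priori estimate needed for surjectivity. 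Integrating $\dot{\bu}$ from the initial datum $\bu_0\in V$ gives $\bu\in C^1([0,T];V)$, and then $\bsigma\in C([0,T];Q)$ is read off from \eqref{eq:27} using the Lipschitz bounds \eqref{eq:2}(a), \eqref{eq:3}(a) and $\fb\in C([0,T];V)$ from \eqref{eq:25}. A Gronwall argument shows the solution map $\zeta\mapsto\bu_\zeta$ is Lipschitz from $C([0,T];Z_0)$ into $C([0,T];V)$, with the $L_{\cal B}$-dependence on $\zeta$ controlled.

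Next I would handle the parabolic inclusion \eqref{eq:29}. With $\bu$ (hence $\bvarepsilon(\bu)$) fixed, this is a standard linear parabolic variational inequality on the closed convex set $K\subset Z=H^1(\Omega)$: the bilinear form $a(\cdot,\cdot)$ from \eqref{eq:26} is bounded and, thanks to $\kappa>0$ in \eqref{eq:20}, coercive on $Z$ modulo the $L^2$-norm (a Gårding inequality, which suffices because of the $(\dot\zeta,\xi-\zeta)_{Z_0}$ term); the right-hand side $\phi(\bvarepsilon(\bu),\zeta)$ is handled either by Lipschitz perturbation/Banach fixed point in $\zeta$ using \eqref{eq:19}(a), or absorbed into the outer fixed point. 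The classical theory of parabolic VIs (Brezis, or \cite{HS2002}) then gives a unique $\zeta\in H^1(0,T;L^2(\Omega))\cap L^2(0,T;H^1(\Omega))$ with $\zeta(0)=\zeta_0\in K$ from \eqref{eq:22}, and a Lipschitz dependence of $\zeta$ on $\bvarepsilon(\bu)$ in the $C([0,T];Z_0)$ norm. Composing the two solution maps yields an operator $\Lambda$ on $C([0,T];Z_0)$ whose Lipschitz constant over $[0,t]$ is bounded by $c\,t$; iterating, some power $\Lambda^m$ is a contraction, so $\Lambda$ has a unique fixed point, which is the unique solution of Problem \ref{P1}. Finally ${\rm Div}\,\bsigma\in L^2(\Omega)^d$ follows from \eqref{eq:13}, i.e.\ ${\rm Div}\,\bsigma=-\fb_0$ with $\fb_0\in C([0,T];L^2(\Omega)^d)$ by \eqref{eq:21} — this is recovered by testing \eqref{eq:28} with $\bv=\dot{\bu}(t)\pm\bw$ for $\bw\in C_0^\infty(\Omega)^d$ (so the $\Gamma_3$ term drops), which identifies the equilibrium equation in the distributional sense.

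The main obstacle is the coupling between the two subproblems and, in particular, keeping the a priori bounds uniform so that the fixed-point contraction constant genuinely scales like $t$: the strain field feeding into $\phi$ in \eqref{eq:29} involves $\bu$, while $\bu$ depends on $\zeta$ through $\mathcal{B}$ in \eqref{eq:27}, so one must be careful that the Lipschitz estimates chain together without circular loss of regularity — this is where the monotonicity/smallness conditions \eqref{eq:31a} and the alternative hypothesis on $j^0$ are essential, and where I would need to reproduce the estimates of \cite{GOS15} faithfully rather than quote them verbatim, since the friction functional here is slightly more general.
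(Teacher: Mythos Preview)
Your proposal is sound and matches the paper's approach: the paper does not actually prove Theorem~\ref{thm:exi} but simply states that it can be proved ``similar to \cite[Theorem 5.1]{GOS15}'', and the fixed-point architecture you outline (freeze the damage, solve the time-parametrized elliptic HVI for the velocity using \eqref{eq:31a} for strong monotonicity, integrate to get $\bu$, solve the parabolic VI for a new damage, then contract via iterated powers) is precisely the structure of that reference. Your identification of the role of the two alternative hypotheses on $j$ in the coercivity/a priori estimate step and your recovery of ${\rm Div}\,\bsigma\in L^2(\Omega)^d$ by testing with compactly supported functions are both correct.
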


In the numerical solution of Problem \ref{P1}, it will be convenient to introduce the velocity variable
\begin{equation}
\bw(t)=\dot{\bu}(t).
\label{eq:32}
\end{equation}
Given the velocity $\bw(t)$ and the initial displacement $\bu(0)=\bu_0$ from \eqref{eq:30}, we can recover
the displacement by the formula
\begin{equation}
\bu(t)=\bu_0+\int_0^t \bw(s)\,ds.
\label{eq:33}
\end{equation}
Then \eqref{eq:27}--\eqref{eq:28} can be rewritten as 
\begin{align}
&\bsigma(t)  = {\cal A}\bvarepsilon(\bw(t))+{\cal B}(\bvarepsilon(\bu(t)),\zeta(t)),  \label{eq:34}\\[1mm]
&(\bsigma(t), \bvarepsilon(\bv -\bw(t)))_Q+ \int_{\Gamma_3} j^0(\bw_\tau(t);\bv_\tau -\bw_\tau(t))\,da
\geq (\fb(t), \bv-\bw(t))_V\quad \forall\,\bv\in V,   \label{eq:35}
\end{align}
for all $t\in[0,T]$.

\section{Numerical analysis of the weak formulation}\label{sec:NA}

In this section, we introduce and study a fully discrete numerical scheme to solve Problem \ref{P1}.
We assume the conditions stated in Theorem \ref{thm:exi} are valid so that Problem \ref{P1} has a unique solution.

For the approximation of the time derivative of the damage function, we use finite difference.
We divide the time interval $[0,T]$ uniformly and comment that much of 
the discussion of the numerical method below can be extended straightforward to the case of 
general partition of the time interval.  Thus, let $N$ be a positive integer, and define $k=T/N$ the step-size.
Then $0=t_0<t_1<\cdots<t_N=T$ is a uniform partition of $[0,T]$ with the nodes $t_n=nk$, $n=0,1,\cdots,N$.
For a function $z(t)$ continuous on $[0,T]$, we write $z_n=z(t_n)$.  We use the backward difference approximation
\begin{equation}
\dot\zeta(t_n)\approx \delta \zeta_n:=\frac{\zeta_n-\zeta_{n-1}}{k},\quad 1\le n\le N.
\label{NA1}
\end{equation}
For the spatial discretization, we use the finite element method.  For simplicity, we assume $\Omega$ is a
polygonal/polyhedral domain, and express the three parts of the boundary, $\Gamma_k$, $1\le k\le 3$, 
as unions of closed flat components with disjoint interiors:
\[ \overline{\Gamma_k}=\cup_{i=1}^{i_k}\Gamma_{k,i},\quad 1\le k\le 3.\]
Let $\{{\cal T}^h\}_h$ be a regular family of finite element partitions of $\overline{\Omega}$ into
triangular/tetrahedral elements, compatible with the partition of the boundary $\partial\Omega$ into 
$\Gamma_{k,i}$, $1\le i\le i_k$, $1\le k\le 3$, in the sense that if the intersection of one side/face 
of an element with one set $\Gamma_{k,i}$ has a positive measure with respect to $\Gamma_{k,i}$, 
then the side/face lies entirely in $\Gamma_{k,i}$.  Here $h\to0$ denotes the finite element mesh-size.
Corresponding to the partition ${\cal T}^h$, we introduce the linear finite element space
\begin{equation}
V^h = \left\{\bv^h\in C(\overline{\Omega})^d \mid \bv^h|_T\in \mathbb{P}_1(T)^d
   \ \ \forall\ T\in {\cal T}^h,\,\bv^h=\bzero\ {\rm on\ }\Gamma_1,\, v^h_\nu=0\ {\rm on\ }\Gamma_3\right\}
\label{NA2}
\end{equation}
for the displacement field, the piecewise constant finite element space
\begin{equation}
Q^h=\{\btau^h\in Q\mid \btau^h|_T\in\mathbb{R}^{d\times d}\ \forall\,T\in {\cal T}^h\}
\label{NA3}
\end{equation}
for the stress field, and the linear finite element space
\begin{equation}
Z^h=\left\{\xi^h\in C(\overline{\Omega})\mid\xi^h|_T\in \mathbb{P}_1(T)\ \forall\,T\in {\cal T}^h\right\}
\label{NA4}
\end{equation}
for the damage field.  Define the constrained subset of $Z^h$:
\begin{equation}
K^h=\left\{\xi^h\in Z^h \mid \xi^h|_T\in [0,1]\ \forall\,T\in {\cal T}^h\right\}.
\label{NA5}
\end{equation}

Let $\bu_0^h\in V^h$ and $\zeta_0^h\in K^h$ be appropriate approximations of $\bu_0$ and $\zeta_0$ such that
\begin{equation}
\|\bu_0-\bu^{h}_0\|_V\le c\,h,\quad \|\zeta_0-\zeta^h_0\|_{Z_0}\le c\,h.
\label{NA5a}
\end{equation}
These conditions are valid if, e.g., $\bu_0\in H^2(\Omega)^d$, $\zeta_0\in H^1(\Omega)$, and we define 
$\bu_0^h\in V^h$ to be the interpolant or $H^1(\Omega)^d$- or $L^2(\Omega)^d$-projection of $\bu_0$ onto $V^h$, 
define $\zeta_0^h\in K^h$ to be the $L^2(\Omega)$-projection of $\zeta_0$ onto $K^h$.  The smoothness 
conditions $\bu_0\in H^2(\Omega)^d$ and $\zeta_0\in H^1(\Omega)$ will follow from the solution regularities 
\eqref{NA12} and \eqref{NA14} below.

The discrete velocity and displacement approximations are denoted by $\{\bw^{hk}_n\}_{n=1}^N\subset V^h$ and 
$\{\bu^{hk}_n\}_{n=0}^N\subset V^h$, whereas the discrete stress and damage function approximations are 
denoted by $\{\bsigma^{hk}_n\}_{n=1}^N\subset Q^h$ and $\{\zeta^{hk}_n\}_{n=0}^N\subset K^h$.  Let 
${\cal P}_{Q^h}\colon Q\to Q^h$ be the orthogonal projection from $Q$ to $Q^h$, defined by 
\begin{equation}
{\cal P}_{Q^h}\bsigma\in Q^h,\quad ({\cal P}_{Q^h}\bsigma,\btau^h)_Q=(\bsigma,\btau^h)_Q\quad\forall\,\bsigma \in Q, \btau^h\in Q^h.
\label{NA6}
\end{equation}
Then a fully discrete scheme for Problem \ref{P1} is the following.

\begin{problem}\label{P1hk}
Find a discrete displacement field $\bu^{hk}=\{\bu^{hk}_n\}_{n=0}^N\subset V^h$, a discrete stress
field $\bsigma^{hk}=\{\bsigma^{hk}_n\}_{n=1}^N\subset Q^h$, and a discrete damage field
$\zeta^{hk}=\{\zeta^{hk}_n\}_{n=0}^N\subset K^h$ such that for $n=1,2,\dots,N$,
\begin{align}
& \bsigma^{hk}_n={\cal P}_{Q^h}{\cal A}\bvarepsilon(\bw^{hk}_n)
 +{\cal P}_{Q^h}{\cal B}(\bvarepsilon(\bu^{hk}_{n-1}),\zeta^{hk}_{n-1}),   \label{NA7} \\[1mm]
& (\bsigma^{hk}_n,\bvarepsilon(\bv^h -\bw^{hk}_n))_Q
+ \int_{\Gamma_3} j^0(\bw^{hk}_{n\tau};\bv^h_\tau-\bw^{hk}_{n\tau})\,da
\ge(\fb_n, \bv^h-\bw^{hk}_n)_V \quad \forall\, \bv^h\in V^h,   \label{NA8} \\[1mm]
& (\delta \zeta^{hk}_n, \xi^h -\zeta^{hk}_n)_{Z_0} +a(\zeta^{hk}_n,\xi^h-\zeta^{hk}_n)
\ge(\phi(\bvarepsilon(\bu^{hk}_{n-1}),\zeta^{hk}_{n-1}),\xi^h-\zeta^{hk}_n)_{Z_0}
\quad\forall\, \xi^h\in K^h,   \label{NA9}
\end{align}
and 
\begin{equation}
\bu^{hk}_0=\bu_0^h,\quad \zeta^{hk}_0=\zeta_0^h. \label{NA10}
\end{equation}
Here $\{\bu^{hk}_0\}_{n=0}^N$ and $\{\bw^{hk}_n\}_{n=1}^N$ are related by the equalities
\begin{equation}
\bw^{hk}_n=\delta\bu^{hk}_n \quad \mbox{and} \quad \bu^{hk}_n=\bu_0^h+k \sum_{i=1}^n \bw^{hk}_i.
\label{NA11}
\end{equation}
\end{problem}

Note that for implementation, \eqref{NA7} and \eqref{NA8} are combined together to give
\begin{align}
& ({\cal A}\bvarepsilon(\bw^{hk}_n),\bvarepsilon(\bv^h -\bw^{hk}_n))_Q
+({\cal B}(\bvarepsilon(\bu^{hk}_{n-1}),\zeta^{hk}_{n-1}),\bvarepsilon(\bv^h -\bw^{hk}_n))_Q
+\int_{\Gamma_3} j^0(\bw^{hk}_{n\tau};\bv^h_\tau-\bw^{hk}_{n\tau})\,da\nonumber\\
&\qquad \ge(\fb_n, \bv^h-\bw^{hk}_n)_V \quad \forall\, \bv^h\in V^h. \label{NA11a}
\end{align}

The solution existence and uniqueness of Problem \ref{P1hk} can be proved by an induction argument.  
The focus of the rest of this section is to bound the numerical solution errors.  For this purpose,
we assume the following additional solution regularities:
\begin{align}
&\bu\in W^{2,1}(0,T;V)\cap C([0,T];H^2(\Omega)^d), \label{NA12}\\
&\bsigma\in C([0,T];H^1(\Omega)^{d\times d}),  \label{NA13}\\
&\zeta\in H^2(0,T;Z_0)\cap C^1([0,T];Z)\cap C([0,T];H^2(\Omega)).   \label{NA14}
\end{align}
Then following the argument in \cite[Section 8.1]{HS2002}, we can show that for all $t\in (0,T)$,
\begin{align}
{\rm Div}\,\bsigma + {\fb}_0& =\bzero\quad\mbox{a.e.\ in}\ \Omega, \label{NA15}\\
\bsigma\bnu& ={\fb}_2 \quad \mbox{a.e.\ on}\ \Gamma_2,  \label{NA16}\\
u_\nu&=0\quad \mbox{a.e.\ on}\ \Gamma_3.  \label{NA17}
\end{align}

We first show the uniform boundedness of the numerical solution.

\begin{lemma}\label{lem:bd}
There exists a constant $M>0$, independent of $h$ and $k$, such that 
\[ \max_{1\le n\le N}\|\bw^{hk}_n\|_V \le M. \]
\end{lemma}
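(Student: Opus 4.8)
The plan is to derive a discrete energy estimate at a generic time level and then close it with a discrete Gronwall inequality. First I would test the combined discrete relation \eqref{NA11a} at step $n$ with $\bv^h=\bzero\in V^h$; after rearranging, this reads
\[
({\cal A}\bvarepsilon(\bw^{hk}_n),\bvarepsilon(\bw^{hk}_n))_Q
\le (\fb_n,\bw^{hk}_n)_V-({\cal B}(\bvarepsilon(\bu^{hk}_{n-1}),\zeta^{hk}_{n-1}),\bvarepsilon(\bw^{hk}_n))_Q
+\int_{\Gamma_3} j^0(\bw^{hk}_{n\tau};-\bw^{hk}_{n\tau})\,da.
\]
On the left-hand side, the strong monotonicity \eqref{eq:2}(b) applied with $\bvarepsilon_2=\bzero$ (and integrated over $\Omega$), together with $\|\bvarepsilon(\bw^{hk}_n)\|_Q=\|\bw^{hk}_n\|_V$ and \eqref{eq:2}(d), gives the lower bound $m_{\cal A}\|\bw^{hk}_n\|_V^2-\|{\cal A}(\cdot,\bzero)\|_Q\,\|\bw^{hk}_n\|_V$.

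Next I would bound the three terms on the right. The force term is at most $\|\fb\|_{C([0,T];V)}\,\|\bw^{hk}_n\|_V$ by \eqref{eq:25}. For the elasticity term, the Lipschitz bound \eqref{eq:3}(a) together with \eqref{eq:3}(c) yields $\|{\cal B}(\bvarepsilon(\bu^{hk}_{n-1}),\zeta^{hk}_{n-1})\|_Q\le c+L_{\cal B}\|\bu^{hk}_{n-1}\|_V$, where I use that $\zeta^{hk}_{n-1}\in K^h$ forces $0\le\zeta^{hk}_{n-1}\le1$ a.e.\ in $\Omega$, hence $\|\zeta^{hk}_{n-1}\|_{Z_0}\le|\Omega|^{1/2}$; moreover \eqref{NA11} and \eqref{NA5a} give $\|\bu^{hk}_{n-1}\|_V\le c+k\sum_{i=1}^{n-1}\|\bw^{hk}_i\|_V$. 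The delicate term is the friction one: from \eqref{eq:19a}(d) with $\bxi_1=\bxi$, $\bxi_2=\bzero$, combined with \eqref{int7} and \eqref{eq:19a}(c), one obtains the pointwise growth bound $j^0(\bx,\bxi;-\bxi)\le(c_{0\tau}+c_{2\tau})\|\bxi\|$, so that with the trace inequality \eqref{eq:31} and the Cauchy--Schwarz inequality on $\Gamma_3$,
\[
\int_{\Gamma_3} j^0(\bw^{hk}_{n\tau};-\bw^{hk}_{n\tau})\,da\le (c_{0\tau}+c_{2\tau})\,|\Gamma_3|^{1/2}\,c_\tau\,\|\bw^{hk}_n\|_V;
\]
alternatively, under the hypotheses of Theorem \ref{thm:exi} one may use the bound $j^0(\bx,\bxi;-\bxi)\le c_{0\tau}\|\bxi\|+c_{1\tau}\|\bxi\|^2$ (resp.\ $\le d_\tau(1+\|\bxi\|)$) and, in the quadratic case, absorb the resulting $c_{1\tau}c_\tau^2\|\bw^{hk}_n\|_V^2$ into $m_{\cal A}\|\bw^{hk}_n\|_V^2$ using the smallness assumption.

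Collecting the estimates and carrying out the straightforward algebra (cancelling a factor $\|\bw^{hk}_n\|_V$, which is harmless when $\bw^{hk}_n=\bzero$, and using Young's inequality when a quadratic friction term is present), I would arrive at
\[
\|\bw^{hk}_n\|_V\le c\Bigl(1+k\sum_{i=1}^{n-1}\|\bw^{hk}_i\|_V\Bigr),\qquad 1\le n\le N,
\]
with $c$ independent of $h$, $k$ and $n$. A standard discrete Gronwall inequality then gives $\max_{1\le n\le N}\|\bw^{hk}_n\|_V\le M$ for some $M$ independent of $h$ and $k$. I expect the main obstacle to be the friction term: one must verify that, under the structure conditions \eqref{eq:19a} and the smallness hypotheses of Theorem \ref{thm:exi}, the coefficient of $\|\bw^{hk}_n\|_V^2$ generated by $\int_{\Gamma_3}j^0(\bw^{hk}_{n\tau};-\bw^{hk}_{n\tau})\,da$ stays strictly below $m_{\cal A}$, so that it can be absorbed into the coercivity term; once this is in place, the rest is a routine Gronwall argument.
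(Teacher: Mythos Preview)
Your proposal is correct and follows essentially the same route as the paper: test \eqref{NA11a} with $\bv^h=\bzero$, invoke the strong monotonicity of ${\cal A}$, bound the ${\cal B}$-term via its Lipschitz property and the recursion \eqref{NA11} for $\bu^{hk}_{n-1}$, control the friction integral by combining \eqref{eq:19a}(d) (with $\bxi_2=\bzero$) and \eqref{eq:19a}(c), and close with a discrete Gronwall inequality. The only cosmetic difference is that the paper works with squared norms throughout, using Young's inequality to reach $\|\bw^{hk}_n\|_V^2\le c\,k\sum_{i=1}^{n-1}\|\bw^{hk}_i\|_V^2+c$ and absorbing the quadratic friction contribution $c_{2\tau}c_\tau^2\|\bw^{hk}_n\|_V^2$ via the smallness condition \eqref{eq:31a}, whereas you cancel a common factor of $\|\bw^{hk}_n\|_V$ and apply a linear Gronwall inequality.
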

\begin{proof}
Let us fix $n\in\{1,\dots,N\}$.
We take $\bv^h=\bzero\in V^h$ in \eqref{NA11a},
\[ ({\cal A}\bvarepsilon(\bw^{hk}_n),\bvarepsilon(\bw^{hk}_n))_Q\le 
-({\cal B}(\bvarepsilon(\bu^{hk}_{n-1}),\zeta^{hk}_{n-1}),\bvarepsilon(\bw^{hk}_n))_Q
+\int_{\Gamma_3} j^0(\bw^{hk}_{n\tau};-\bw^{hk}_{n\tau})\,da+(\fb_n,\bw^{hk}_n)_V. \]
By \eqref{eq:2}\,(b),
\[ m_{\cal A}\|\bw^{hk}_n\|_V^2\le ({\cal A}\bvarepsilon(\bw^{hk}_n)-{\cal A}(\bzero),\bvarepsilon(\bw^{hk}_n))_Q.\]
Combining these relations, we obtain
\begin{align}
m_{\cal A}\|\bw^{hk}_n\|_V^2 & \le -({\cal A}(\bzero),\bvarepsilon(\bw^{hk}_n))_Q
 -({\cal B}(\bvarepsilon(\bu^{hk}_{n-1}),\zeta^{hk}_{n-1}),\bvarepsilon(\bw^{hk}_n))_Q\nonumber\\
&\quad{} +\int_{\Gamma_3} j^0(\bw^{hk}_{n\tau};-\bw^{hk}_{n\tau})\,da+(\fb_n,\bw^{hk}_n)_V.  \label{NA17a}
\end{align}

Let us treat each of the terms on the right side of \eqref{NA17a}.  Let $\delta>0$ be a small constant 
to be chosen later.  We recall the modified Cauchy-Schwarz inequality: for any $a,b\in\mathbb{R}$,
\begin{equation}
a\,b\le \delta\,a^2+c\,b^2,\quad c=\frac{1}{4\,\delta}. 
\label{mCS}
\end{equation}
Then,
\begin{equation}
-({\cal A}(\bzero),\bvarepsilon(\bw^{hk}_n))_Q \le \delta\,\|\bw^{hk}_n\|_V^2+c\,\|{\cal A}(\bzero)\|_Q^2.\label{NA17b}
\end{equation}
Similarly, 
\[ -({\cal B}(\bvarepsilon(\bu^{hk}_{n-1}),\zeta^{hk}_{n-1}),\bvarepsilon(\bw^{hk}_n))_Q
\le \delta\,\|\bw^{hk}_n\|_V^2+c\,\|{\cal B}(\bvarepsilon(\bu^{hk}_{n-1}),\zeta^{hk}_{n-1})\|_Q^2.\]
By \eqref{eq:3}\,(a),
\begin{align*}
\|{\cal B}(\bvarepsilon(\bu^{hk}_{n-1}),\zeta^{hk}_{n-1})\|
& \le \|{\cal B}(\bvarepsilon(\bu^{hk}_{n-1}),\zeta^{hk}_{n-1})-{\cal B}(\bzero,0)\|+\|{\cal B}(\bzero,0)\|\\
& \le L_{\cal B}\left(\|\bvarepsilon(\bu^{hk}_{n-1})\|+|\zeta^{hk}_{n-1}|\right)+\|{\cal B}(\bzero,0)\|.
\end{align*}
Then, by noting that $|\zeta^{hk}_{n-1}|\le 1$, we have
\[ \|{\cal B}(\bvarepsilon(\bu^{hk}_{n-1}),\zeta^{hk}_{n-1})\|_Q\le c\left(\|\bu^{hk}_{n-1}\|_V+1\right). \]
Now
\[ \bu^{hk}_{n-1}=\bu^{hk}_0+k\sum_{i=1}^{n-1}\bw^{hk}_i, \]
and so
\begin{align*}
\|\bu^{hk}_{n-1}\|_V & \le \|\bu^{hk}_0\|_V+k\sum_{i=1}^{n-1}\|\bw^{hk}_i\|_V,\\
\|\bu^{hk}_{n-1}\|_V^2 & \le c+c\,k\sum_{i=1}^{n-1}\|\bw^{hk}_i\|_V^2.
\end{align*}
Hence,
\begin{equation}
-({\cal B}(\bvarepsilon(\bu^{hk}_{n-1}),\zeta^{hk}_{n-1}),\bvarepsilon(\bw^{hk}_n))_Q
\le \delta\,\|\bw^{hk}_n\|_V^2+c\,k\sum_{i=1}^{n-1}\|\bw^{hk}_i\|_V^2+c.   \label{NA17c}
\end{equation}
Write
\[ j^0(\bw^{hk}_{n\tau};-\bw^{hk}_{n\tau})=\left[j^0(\bw^{hk}_{n\tau};-\bw^{hk}_{n\tau})
+j^0(\bzero;\bw^{hk}_{n\tau})\right]-j^0(\bzero;\bw^{hk}_{n\tau}).\]
By \eqref{eq:19a}\,(d),
\[ j^0(\bw^{hk}_{n\tau};-\bw^{hk}_{n\tau})+j^0(\bzero;\bw^{hk}_{n\tau})\le c_{2\tau}\|\bw^{hk}_{n\tau}\|^2. \]
By \eqref{eq:19a}\,(c),
\[ -j^0(\bzero;\bw^{hk}_{n\tau})\le c_{0\tau}\|\bw^{hk}_{n\tau}\|\le \delta\,\|\bw^{hk}_n\|^2+c. \]
Thus,
\begin{equation}
\int_{\Gamma_3} j^0(\bw^{hk}_{n\tau};-\bw^{hk}_{n\tau})\,da
\le \left(c_{2\tau}c_\tau^2+\delta\right)\|\bw^{hk}\|^2_V+c.  \label{NA17d}
\end{equation}
Finally, 
\begin{equation}
(\fb_n,\bw^{hk}_n)_V\le \delta\,\|\bw^{hk}\|^2_V+c\,\|\fb_n\|_V^2.  \label{NA17e}
\end{equation}
Use \eqref{NA17b}--\eqref{NA17e} in \eqref{NA17a},
\[ \left(m_{\cal A}-c_{2\tau}c_\tau^2-4\,\delta\right)\|\bw^{hk}_n\|_V^2 
\le c\,k\sum_{i=1}^{n-1}\|\bw^{hk}_i\|_V^2+c, \]
where the constant $c$ depends on $\|{\cal A}(\bzero)\|_Q$, $\|{\cal B}(\bzero,0)\|_Q$, $\|\fb\|_{C([0,T];V)}$,
and an upper bound on $\|\bu^{hk}_0\|_V$.  By choosing $\delta=(m_{\cal A}-c_{2\tau}c_\tau^2)/8$, we have
\[ \|\bw^{hk}_n\|_V^2 \le c\,k\sum_{i=1}^{n-1}\|\bw^{hk}_i\|_V^2+c. \]
Applying the Gronwall inequality, we get
\[ \|\bw^{hk}_n\|_V^2 \le c, \quad 1\le n\le N. \]
The proof is completed.  \hfill
\end{proof}

We will make use of several relations presented in \cite{SHS2006}.  By \cite[(3.25)]{SHS2006}, 
\begin{equation}
\|\bu_n-\bu^{hk}_{n-1}\|_V^2\le c\left(h^2+k^2\right)+c\,k\sum_{i=1}^{n-1}\|\bw_i-\bw^{hk}_i\|_V^2.
\label{NA19a}
\end{equation}
By \cite[(3.27)]{SHS2006}, 
\begin{equation}
\|\bu_n-\bu^{hk}_n\|_V^2\le c\left(h^2+k^2\right)+c\,k\sum_{i=1}^{n}\|\bw_i-\bw^{hk}_i\|_V^2.
\label{NA19}
\end{equation}
By \cite[(3.59)]{SHS2006}, 
\begin{equation}
\|\zeta_n-\zeta^{hk}_n\|_{Z_0}^2+ k \sum_{i=1}^n |\zeta_i-\zeta^{hk}_i|_{Z}^2
\le c\left(h^2+k^2+k\sum_{i=1}^{n-1}\|\bu_i-\bu^{hk}_i\|_V^2\right).
\label{NA20}
\end{equation}
By \cite[(3.71)]{SHS2006}, 
\begin{align}
\|\bsigma_n-\bsigma^{hk}_n\|_Q^2 & \le c\left(\|\bw_n-\bw^{hk}_n\|_V^2+\|\zeta_n-\zeta^{hk}_{n-1}\|_{Z_0}^2\right)
\nonumber\\
&\quad{} +c\,k \sum_{i=1}^{n-1}\|\bw_i-\bw^{hk}_i\|_V^2 +c\left(h^2+k^2\right).
\label{NA21}
\end{align}
From the regularity assumption \eqref{NA14}, we see that
\begin{equation}
\|\zeta_n-\zeta_{n-1}\|_{Z_0}=O(k).
\label{NA21z}
\end{equation}

By \eqref{eq:2}\,(b),
\begin{equation}
m_{\cal A}\|\bw_n-\bw^{hk}_n\|_V^2\le \left({\cal A}\bvarepsilon(\bw_n)-
{\cal A}\bvarepsilon(\bw_n^{hk}), \bvarepsilon(\bw_n-\bw_n^{hk})\right)_Q.
\label{NA21a}
\end{equation}
For any $\bv^h_n\in V^h$, write
\begin{align*}
\left({\cal A}\bvarepsilon(\bw_n)-{\cal A}\bvarepsilon(\bw_n^{hk}),\bvarepsilon(\bw_n-\bw_n^{hk})\right)_Q
& = \left({\cal A}\bvarepsilon(\bw_n)-{\cal A}\bvarepsilon(\bw_n^{hk}),\bvarepsilon(\bw_n-\bv_n^h)\right)_Q\\
&\quad{}+\left({\cal A}\bvarepsilon(\bw_n)-{\cal A}\bvarepsilon(\bw_n^{hk}),\bvarepsilon(\bv^h_n-\bw_n^{hk})\right)_Q.
\end{align*}
From \eqref{eq:34} at $t=t_n$,
\[ {\cal A}\bvarepsilon(\bw_n)=\bsigma_n-{\cal B}(\bvarepsilon(\bu_n),\zeta_n).\]
From \eqref{NA7},
\[ {\cal P}_{Q^h}{\cal A}\bvarepsilon(\bw^{hk}_n)=
\bsigma^{hk}_n-{\cal P}_{Q^h}{\cal B}(\bvarepsilon(\bu^{hk}_{n-1}),\zeta^{hk}_{n-1}).\]
Thus,
\begin{align*}
{\cal P}_{Q^h}({\cal A}\bvarepsilon(\bw_n)-{\cal A}\bvarepsilon(\bw^{hk}_n))
& = (\bsigma_n-\bsigma^{hk}_n)-(I-{\cal P}_{Q^h})\bsigma_n \\
&{}\quad -{\cal P}_{Q^h}({\cal B}(\bvarepsilon(\bu_n),\zeta_n)-{\cal B}(\bvarepsilon(\bu_{n-1}^{hk}),\zeta_{n-1}^{hk})).
\end{align*}
Then, from \eqref{NA6},
\begin{align*}
& \left({\cal A}\bvarepsilon(\bw_n)-{\cal A}\bvarepsilon(\bw_n^{hk}), \bvarepsilon(\bv^h_n-\bw^{hk}_n)\right)_Q\\[1.5mm]
&{}\qquad =(\bsigma_n-\bsigma^{hk}_n,\bvarepsilon(\bv^h_n-\bw^{hk}_n))_Q
 -((I-{\cal P}_{Q^h})\bsigma_n,\bvarepsilon(\bv^h_n-\bw^{hk}_n))_Q \\
&{}\qquad \quad -({\cal B}(\bvarepsilon(\bu_n),\zeta_n)-
{\cal B}(\bvarepsilon(\bu_{n-1}),\zeta_{n-1}),\bvarepsilon(\bv^h_n-\bw^{hk}_n))_Q\\[1.5mm]
&{}\qquad \le (\bsigma_n-\bsigma^{hk}_n,\bvarepsilon(\bv^h_n-\bw^{hk}_n))_Q\\
&{}\qquad \quad +c\left(\|\bu_n-\bu_{n-1}^{hk}\|_V+\|\zeta_n-\zeta_{n-1}^{hk}\|_{Z_0}\right)\|\bv^h_n-\bw^{hk}_n\|_V.
\end{align*}
Apply the above relations in \eqref{NA21a} to obtain
\begin{align}
m_{\cal A}\|\bw_n-\bw^{hk}_n\|_V^2
& \le c\,\|\bw_n-\bw_n^{hk}\|_V\|\bw_n-\bv^h_n\|_V+(\bsigma_n-\bsigma^{hk}_n,\bvarepsilon(\bv^h_n-\bw^{hk}_n))_Q\nonumber\\
&{}\quad +c\,\left(\|\bu_n-\bu_{n-1}^{hk}\|_V+\|\zeta_n-\zeta_{n-1}^{hk}\|_{Z_0}\right)\|\bv^h_n-\bw^{hk}_n\|_V\nonumber\\
&{}\quad  +c\,h\,\|\bv^h_n-\bw^{hk}_n\|_V.
\label{NA21b}
\end{align}
By the triangle inequality of the norm,
\[ \|\bv^h_n-\bw^{hk}_n\|_V\le \|\bw_n-\bv^h_n\|_V+\|\bw_n-\bw^{hk}_n\|_V. \]
For any $\delta>0$, we apply \eqref{mCS} and derive from \eqref{NA21b} that
\begin{align}
\left(m_{\cal A}-2\,\delta\right) \|\bw_n-\bw^{hk}_n\|_V^2 
& \le c\,\|\bw_n-\bv^h_n\|_V^2+(\bsigma_n-\bsigma^{hk}_n,\bvarepsilon(\bv^h_n-\bw^{hk}_n))_Q\nonumber\\
&{}\quad +c\left(h^2+\|\bu_n-\bu_{n-1}^{hk}\|_V^2 +\|\zeta_n-\zeta_{n-1}^{hk}\|_{Z_0}^2\right). \label{NA21c}
\end{align}
Using \eqref{NA19a} to bound the term $\|\bu_n-\bu_{n-1}^{hk}\|_V^2$, we get from \eqref{NA21c} that
\begin{align}
\left(m_{\cal A}-2\,\delta\right) \|\bw_n-\bw^{hk}_n\|_V^2 
& \le c\,\|\bw_n-\bv^h_n\|_V^2+(\bsigma_n-\bsigma^{hk}_n,\bvarepsilon(\bv^h_n-\bw^{hk}_n))_Q\nonumber\\
&{}\quad +c\left(h^2+k^2+k\sum_{i=1}^{n-1}\|\bw_i-\bw^{hk}_i\|_V^2+\|\zeta_n-\zeta_{n-1}^{hk}\|_{Z_0}^2\right). 
\label{NA22}
\end{align}

We take $\bv=\bw^{hk}_n\in V^h$ in \eqref{eq:35} at $t=t_n$ to get
\begin{equation}
-(\bsigma_n, \bvarepsilon(\bw^{hk}_n-\bw_n))_Q
\le \int_{\Gamma_3} j^0(\bw_{n\tau};\bw^{hk}_{n\tau} -\bw_{n\tau})\,da- (\fb_n,\bw^{hk}_n-\bw_n)_V.
\label{NA23}
\end{equation}
From \eqref{NA8},
\begin{equation}
-(\bsigma^{hk}_n,\bvarepsilon(\bv^h -\bw^{hk}_n))_Q
\le \int_{\Gamma_3} j^0(\bw^{hk}_{n\tau};\bv^h_\tau-\bw^{hk}_{n\tau})\,da-(\fb_n, \bv^h-\bw^{hk}_n)_V
\label{NA24}
\end{equation}
Write
\[ (\bsigma_n-\bsigma^{hk}_n,\bvarepsilon(\bv^h_n-\bw^{hk}_n))_Q
=(\bsigma_n,\bvarepsilon(\bv^h_n-\bw_n))_Q-(\bsigma_n,\bvarepsilon(\bw^{hk}_n-\bw_n))_Q
-(\bsigma^{hk}_n,\bvarepsilon(\bv^h_n-\bw^{hk}_n))_Q.\]
By using \eqref{NA23} and \eqref{NA24}, we then have
\begin{align*}
(\bsigma_n-\bsigma^{hk}_n,\bvarepsilon(\bv^h_n-\bw^{hk}_n))_Q 
&\le (\bsigma_n,\bvarepsilon(\bv^h_n-\bw_n))_Q-(\fb_n, \bv^h_n-\bw_n)_V\\
&\quad {} +\int_{\Gamma_3}\left[ j^0(\bw_{n\tau};\bw^{hk}_{n\tau} -\bw_{n\tau})
+ j^0(\bw^{hk}_{n\tau};\bv^h_{n\tau}-\bw^{hk}_{n\tau})\right] da.
\end{align*}
By the sub-additivity of the generalized directional derivative,
\[ j^0(\bw^{hk}_{n\tau};\bv^h_{n\tau}-\bw^{hk}_{n\tau})\le j^0(\bw^{hk}_{n\tau};\bv^h_{n\tau}-\bw_{n\tau})
+j^0(\bw^{hk}_{n\tau};\bw_{n\tau}-\bw^{hk}_{n\tau}).\]
By \eqref{eq:19a}\,(d),
\[ j^0(\bw_{n\tau};\bw^{hk}_{n\tau} -\bw_{n\tau})+j^0(\bw^{hk}_{n\tau};\bw_{n\tau}-\bw^{hk}_{n\tau})
\le c_{2\tau} \|\bw_{n\tau}-\bw^{hk}_{n\tau}\|_{\mathbb{R}^d}^2. \]
Hence,
\begin{align*}
(\bsigma_n-\bsigma^{hk}_n,\bvarepsilon(\bv^h_n-\bw^{hk}_n))_Q 
&\le (\bsigma_n,\bvarepsilon(\bv^h_n-\bw_n))_Q-(\fb_n, \bv^h_n-\bw_n)_V\\
&\quad {} +\int_{\Gamma_3}j^0(\bw^{hk}_{n\tau};\bv^h_{n\tau}-\bw_{n\tau})\, da
+ c_{2\tau} c_\tau^2 \|\bw_{n}-\bw^{hk}_{n}\|_V^2.
\end{align*}
Use this inequality in \eqref{NA22},
\begin{align}
\left(m_{\cal A}-c_{2\tau} c_\tau^2-2\,\delta\right)& \|\bw_n-\bw^{hk}_n\|_V^2\nonumber\\
& \le c\,\|\bw_n-\bv^h_n\|_V^2+ (\bsigma_n,\bvarepsilon(\bv^h_n-\bw_n))_Q-(\fb_n, \bv^h_n-\bw_n)_V  \nonumber \\
&{}\quad +\int_{\Gamma_3}j^0(\bw^{hk}_{n\tau};\bv^h_{n\tau}-\bw_{n\tau})\,da\nonumber\\
&{}\quad +c\left(h^2+k^2+k\sum_{i=1}^{n-1}\|\bw_i-\bw^{hk}_i\|_V^2+\|\zeta_n-\zeta_{n-1}^{hk}\|_{Z_0}^2\right). 
\label{NA25}
\end{align}
Recall the smallness assumption \eqref{eq:31a}.  We can choose $\delta>0$ sufficiently small and derive from 
\eqref{NA25} that
\begin{align*}
\|\bw_n-\bw^{hk}_n\|_V^2 
& \le c\left[\|\bw_n-\bv^h_n\|_V^2+ (\bsigma_n,\bvarepsilon(\bv^h_n-\bw_n))_Q-(\fb_n, \bv^h_n-\bw_n)_V\right]  \\
&{}\quad +c \int_{\Gamma_3}j^0(\bw^{hk}_{n\tau};\bv^h_{n\tau}-\bw_{n\tau})\, da\\
&{}\quad +c\left(h^2+k^2+k\sum_{i=1}^{n-1}\|\bw_i-\bw^{hk}_i\|_V^2+\|\zeta_n-\zeta_{n-1}^{hk}\|_{Z_0}^2\right). 
\end{align*}
With the use of \eqref{NA21z}, we rewrite the above inequality as
\begin{align}
\|\bw_n-\bw^{hk}_n\|_V^2 
& \le c\left[\|\bw_n-\bv^h_n\|_V^2+ (\bsigma_n,\bvarepsilon(\bv^h_n-\bw_n))_Q-(\fb_n, \bv^h_n-\bw_n)_V\right]  \nonumber\\
&{}\quad +c \int_{\Gamma_3}j^0(\bw^{hk}_{n\tau};\bv^h_{n\tau}-\bw_{n\tau})\, da\nonumber\\
&{}\quad +c\left(h^2+k^2+k\sum_{i=1}^{n-1}\|\bw_i-\bw^{hk}_i\|_V^2+\|\zeta_{n-1}-\zeta_{n-1}^{hk}\|_{Z_0}^2\right). 
\label{NA26}
\end{align}
From \eqref{NA20}, 
\[ \|\zeta_{n-1}-\zeta^{hk}_{n-1}\|_{Z_0}^2
\le c\left(h^2+k^2+k\sum_{i=1}^{n-2}\|\bu_i-\bu^{hk}_i\|_V^2\right). \]
Use \eqref{NA19},
\[ k\sum_{i=1}^{n-2}\|\bu_i-\bu^{hk}_i\|_V^2\le c\left(h^2+k^2\right)+c\,k\sum_{i=1}^{n-2}\|\bw_i-\bw^{hk}_i\|_V^2. \]
Hence, from \eqref{NA26},
\begin{align*}
\|\bw_n-\bw^{hk}_n\|_V^2 
& \le c\left[\|\bw_n-\bv^h_n\|_V^2+ (\bsigma_n,\bvarepsilon(\bv^h_n-\bw_n))_Q-(\fb_n, \bv^h_n-\bw_n)_V\right]\\
&{}\quad +c \int_{\Gamma_3}j^0(\bw^{hk}_{n\tau};\bv^h_{n\tau}-\bw_{n\tau})\, da 
+c\left(h^2+k^2+k\sum_{i=1}^{n-1}\|\bw_i-\bw^{hk}_i\|_V^2\right). 
\end{align*}
By an application of the Gronwall inequality,
\begin{align}
\|\bw_n-\bw^{hk}_n\|_V^2 
& \le c\left[\|\bw_n-\bv^h_n\|_V^2+(\bsigma_n,\bvarepsilon(\bv^h_n-\bw_n))_Q-(\fb_n, \bv^h_n-\bw_n)_V\right] \nonumber\\
&{}\quad +c \int_{\Gamma_3}j^0(\bw^{hk}_{n\tau};\bv^h_{n\tau}-\bw_{n\tau})\, da+c\left(h^2+k^2\right). 
\label{NA27}
\end{align}

We multiply \eqref{NA15} by an arbitrary $\bv\in V$, integrate over $\Omega$ and perform an integration by parts,
\[ \int_\Gamma \bsigma\bnu\cdot\bv\,da-\int_\Omega \bsigma\cdot\bvarepsilon(\bv)\,dx+\int_\Omega\fb_0\cdot\bv\,dx=0. \]
Split the integral over $\Gamma$ to three sub-integrals: the sub-integral over $\Gamma_1$ is zero since $\bv=\bzero$
on $\Gamma_1$; for the sub-integral over $\Gamma_2$, we apply the relation \eqref{NA16}; for the sub-integral over 
$\Gamma_3$, we use the relation \eqref{NA17}.   As a result,
\[ \int_\Omega \bsigma\cdot\bvarepsilon(\bv)\,dx-\int_\Omega\fb_0\cdot\bv\,dx
-\int_{\Gamma_2}\fb_2\cdot\bv\,da=\int_{\Gamma_3} \bsigma_\tau\cdot\bv_\tau da. \]
Thus, \eqref{NA27} can be reduced to
\begin{align}
\|\bw_n-\bw^{hk}_n\|_V^2 & \le c\left(\|\bw_n-\bv^h_n\|_V^2+h^2+k^2\right) \nonumber\\
&\quad{} +c\int_{\Gamma_3}\left[\bsigma_{n\tau}\cdot(\bv^h_{n\tau}-\bw_{n\tau})
+j^0(\bw^{hk}_{n\tau};\bv^h_{n\tau}-\bw_{n\tau})\right] da.
\label{NA28}
\end{align}
Since $\bsigma\in C([0,T];H^1(\Omega)^{d\times d})$ by \eqref{NA13}, we have
\[ \bsigma_\tau\in C([0,T];L^2(\Gamma_3))^d. \]
Thus,
\[ \int_{\Gamma_3} \bsigma_{n\tau}\cdot(\bv^h_{n\tau}-\bw_{n\tau})\, da
\le c\,\|\bsigma_\tau\|_{C([0,T];L^2(\Gamma_3)^d)}\|\bv^h_{n\tau}-\bw_{n\tau}\|_{L^2(\Gamma_3)^d}. \]
By Lemma \ref{lem:bd}, $\|\bw^{hk}_{n}\|_V$ is uniformly bounded.  Then, from \eqref{eq:19a}\,(c),
\[ j^0(\bw^{hk}_{n\tau};\bv^h_{n\tau}-\bw_{n\tau})\le 
 c\left(1+\|\bw^{hk}_{n\tau}\|\right)\|\bv^h_{n\tau}-\bw_{n\tau}\|,\]
we have a constant $c$ depending on the upper bound $M$ from Lemma \ref{lem:bd} that
\[ \int_{\Gamma_3}j^0(\bw^{hk}_{n\tau};\bv^h_{n\tau}-\bw_{n\tau})\, da
\le c\,\|\bv^h_{n\tau}-\bw_{n\tau}\|_{L^2(\Gamma_3)^d}. \]
Therefore, from \eqref{NA28}, we can derive the inequality
\begin{equation}
\|\bw_n-\bw^{hk}_n\|_V^2 \le c\left(\|\bw_n-\bv^h_n\|_V^2+\|\bv^h_{n\tau}-\bw_{n\tau}\|_{L^2(\Gamma_3)^d}+h^2+k^2\right)
\quad\forall\,\bv^h\in V^h.
\label{NA29}
\end{equation}

Based on \eqref{NA29}, \eqref{NA19} and \eqref{NA20}, we have proved the following C\'{e}a's inequality 
for error estimation.

\begin{theorem}\label{thm:Cea}
Assume the conditions stated in Theorem \ref{thm:exi}.
Let $(\bu,\zeta)$ be the solution of Problem \ref{P1}, $\bw=\dot{\bu}$, and let $(\bu^{hk},\bw^{hk},\zeta^{hk})$
be defined by Problem \ref{P1hk}.  Then under the solution regularity assumptions \eqref{NA12}--\eqref{NA14}, 
and \eqref{NA5a} on the initial values for the discrete problem, we have 
\begin{align}
& \max_n \|\bw_n-\bw^{hk}_n\|_V+\max_n \|\bu_n-\bu^{hk}_n\|_V
+\max_n\|\zeta_n-\zeta^{hk}_n\|_{Z_0}+ \left(k \sum_{n=1}^N |\zeta_n-\zeta^{hk}_n|_{Z}^2\right)^{1/2}\nonumber\\
&\qquad\le c\left(h+k\right) + c\max_n\inf_{\boldsymbol{v}^h_n\in V^h}
\left(\|\bw_n-\bv^h_n\|_V+\|\bv^h_{n\tau}-\bw_{n\tau}\|_{L^2(\Gamma_3)^d}^{1/2}\right).
\label{NA29a}
\end{align}
\end{theorem}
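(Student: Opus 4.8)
The plan is to assemble the stated C\'ea-type inequality \eqref{NA29a} by combining four ingredients that are already in hand: the velocity estimate \eqref{NA29}, the displacement recovery bound \eqref{NA19}, the damage estimate \eqref{NA20}, and the approximation bounds \eqref{NA5a} on the initial data. The starting point is \eqref{NA29}, which holds for an arbitrary $\bv^h_n\in V^h$; taking the infimum over $\bv^h_n\in V^h$ on the right-hand side and then the maximum over $n$ gives
\[
\max_n\|\bw_n-\bw^{hk}_n\|_V^2\le c\left(h^2+k^2\right)+c\max_n\inf_{\bv^h_n\in V^h}\left(\|\bw_n-\bv^h_n\|_V^2+\|\bv^h_{n\tau}-\bw_{n\tau}\|_{L^2(\Gamma_3)^d}\right),
\]
and taking square roots (using $\sqrt{a+b}\le\sqrt a+\sqrt b$) produces the $\max_n\|\bw_n-\bw^{hk}_n\|_V$ term together with the two terms on the right of \eqref{NA29a}. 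Note that the $\|\bv^h_{n\tau}-\bw_{n\tau}\|_{L^2(\Gamma_3)^d}$ contribution appears under a square root, which is exactly the $\|\bv^h_{n\tau}-\bw_{n\tau}\|_{L^2(\Gamma_3)^d}^{1/2}$ factor in the final bound.

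Next I would feed this velocity bound into \eqref{NA19}: since $k\sum_{i=1}^n\|\bw_i-\bw^{hk}_i\|_V^2\le T\max_i\|\bw_i-\bw^{hk}_i\|_V^2$, the right-hand side of \eqref{NA19} is controlled by $c(h^2+k^2)$ plus $c$ times the squared infimum term, so $\max_n\|\bu_n-\bu^{hk}_n\|_V$ satisfies the same bound (after square-rooting) as the velocity error. Then I would substitute \eqref{NA19} into \eqref{NA20}: again replacing the sum by $T$ times the maximum, the quantity $\|\zeta_n-\zeta^{hk}_n\|_{Z_0}^2+k\sum_{i=1}^n|\zeta_i-\zeta^{hk}_i|_Z^2$ is bounded by $c(h^2+k^2)$ plus $c$ times the squared infimum term. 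Taking the maximum over $n$ on the first piece and keeping the full sum $k\sum_{n=1}^N|\zeta_n-\zeta^{hk}_n|_Z^2$ from the case $n=N$ on the second, then square-rooting, yields the remaining two terms $\max_n\|\zeta_n-\zeta^{hk}_n\|_{Z_0}$ and $(k\sum_{n=1}^N|\zeta_n-\zeta^{hk}_n|_Z^2)^{1/2}$ with the asserted bound. Adding the four resulting inequalities gives \eqref{NA29a}.

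There is no serious obstacle here — the theorem is essentially a bookkeeping consolidation of estimates derived earlier in the section, and the only points requiring a word of care are: (i) the conversion of the discrete sums $k\sum_{i=1}^n(\cdot)$ into $\max_i(\cdot)$, valid because $nk\le T$; (ii) the consistent handling of the square roots so that the boundary approximation term enters with exponent $1/2$ while the interior approximation term enters with exponent $1$; and (iii) absorbing the constants from \eqref{NA5a} into the generic constant $c$. The hypotheses of Theorem~\ref{thm:exi}, together with the regularity \eqref{NA12}--\eqref{NA14}, are invoked precisely to guarantee that \eqref{NA29}, \eqref{NA19}, \eqref{NA20} and the boundary regularity $\bsigma_\tau\in C([0,T];L^2(\Gamma_3)^d)$ are available, and that the constants in all of these are independent of $h$ and $k$.
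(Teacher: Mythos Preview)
Your proposal is correct and follows essentially the same approach as the paper: the theorem is stated as a consolidation of the estimates \eqref{NA29}, \eqref{NA19}, and \eqref{NA20} already derived, and the paper simply writes ``Based on \eqref{NA29}, \eqref{NA19} and \eqref{NA20}, we have proved the following C\'ea's inequality'' without spelling out the bookkeeping you describe. Your account of taking the infimum, replacing discrete sums by $T$ times a maximum, and tracking the square-root so that the boundary term carries exponent $1/2$ is exactly the routine combination the paper leaves implicit.
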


We can apply the standard finite element approximation theory (cf.\ \cite{AH2003, BS2008, Ci1978}) to bound 
the error 
\[ \max_n\inf_{\boldsymbol{v}^h_n\in V^h}
\left(\|\bw_n-\bv^h_n\|_V+\|\bv^h_{n\tau}-\bw_{n\tau}\|_{L^2(\Gamma_3)^d}^{1/2}\right)\]
in \eqref{NA29a}, and derive the next result from Theorem \ref{thm:Cea}.

\begin{corollary} \label{cor:order}
Keep the assumptions stated in Theorem \ref{thm:Cea}.
Under the additional solution regularity assumptions
\begin{equation}
\bw\in C([0,T];H^2(\Omega)^d),\quad \bw_\tau|_{\Gamma_{3,i}}\in C([0,T];H^2(\Gamma_{3,i})^d),\ 1\le i\le i_3,
\label{NA30}
\end{equation}
we have the following error bound:
\begin{align}
& \max_n \|\bw_n-\bw^{hk}_n\|_V+\max_n \|\bu_n-\bu^{hk}_n\|_V
+\max_n\|\zeta_n-\zeta^{hk}_n\|_{Z_0}+ \left(k \sum_{n=1}^N |\zeta_n-\zeta^{hk}_n|_{Z}^2\right)^{1/2}\nonumber\\
&\qquad \le c\left(h+k\right).
\label{NA31}
\end{align}
\end{corollary}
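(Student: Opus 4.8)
The final statement is Corollary \ref{cor:order}, which derives the $O(h+k)$ error bound from the Céa-type inequality in Theorem \ref{thm:Cea} by applying finite element approximation theory under the extra regularity \eqref{NA30}. Let me sketch how I'd prove it.

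The plan is straightforward: choose a good candidate $\bv^h_n$ in the infimum on the right side of \eqref{NA29a} and estimate the two terms $\|\bw_n - \bv^h_n\|_V$ and $\|\bv^h_{n\tau} - \bw_{n\tau}\|_{L^2(\Gamma_3)^d}^{1/2}$ using standard interpolation error bounds.

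First, take $\bv^h_n = \Pi^h \bw_n$, the finite element interpolant of $\bw_n = \dot{\bu}(t_n)$ onto $V^h$. Since $\bw \in C([0,T]; H^2(\Omega)^d)$ by \eqref{NA30}, $\bw_n \in H^2(\Omega)^d$ and the standard interpolation estimate gives $\|\bw_n - \Pi^h \bw_n\|_{H^1(\Omega)^d} \le c\, h\, \|\bw_n\|_{H^2(\Omega)^d}$, hence $\|\bw_n - \bv^h_n\|_V \le c\, h$ with $c$ uniform in $n$ thanks to the $C([0,T];H^2)$-regularity. One technical point to verify: the interpolant must lie in $V^h$, i.e., respect the constraints $\bv^h = \bzero$ on $\Gamma_1$ and $v^h_\nu = 0$ on $\Gamma_3$. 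Because $\bw_n$ itself satisfies these (being the velocity, $\dot u_\nu = 0$ on $\Gamma_3$ follows from $u_\nu = 0$ there, and $\bw_n = \bzero$ on $\Gamma_1$), and because the mesh is compatible with the decomposition of $\Gamma$ into flat pieces $\Gamma_{k,i}$, the nodal interpolant inherits these boundary conditions — here the flatness of $\Gamma_{3,i}$ is what makes $v^h_\nu = 0$ at the nodes imply $v^h_\nu = 0$ on all of $\Gamma_{3,i}$.

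Second, for the boundary term, restrict to each flat component $\Gamma_{3,i}$ and use the trace/interpolation estimate on $\Gamma_{3,i}$: since $\bw_\tau|_{\Gamma_{3,i}} \in C([0,T]; H^2(\Gamma_{3,i})^d)$ by \eqref{NA30}, and the trace of the interpolant $\Pi^h\bw_n$ on $\Gamma_{3,i}$ is the (boundary) interpolant of $\bw_{n\tau}|_{\Gamma_{3,i}}$, we get $\|\bv^h_{n\tau} - \bw_{n\tau}\|_{L^2(\Gamma_{3,i})^d} \le c\, h^2 \|\bw_{n\tau}\|_{H^2(\Gamma_{3,i})^d}$, uniformly in $n$. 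Summing over $i = 1,\dots,i_3$ gives $\|\bv^h_{n\tau} - \bw_{n\tau}\|_{L^2(\Gamma_3)^d} \le c\, h^2$, so $\|\bv^h_{n\tau} - \bw_{n\tau}\|_{L^2(\Gamma_3)^d}^{1/2} \le c\, h$. Plugging both bounds into \eqref{NA29a} yields $\max_n\inf_{\bv^h_n}(\cdots) \le c\, h$, and therefore the left side of \eqref{NA29a} is bounded by $c(h+k)$, which is exactly \eqref{NA31}.

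The main obstacle, such as it is, is not in the estimation — that is entirely routine once the right interpolant is chosen — but in the bookkeeping needed to justify that the nodal interpolant of $\bw_n$ actually belongs to the constrained space $V^h$ and that its trace on $\Gamma_{3,i}$ is the corresponding lower-dimensional interpolant, so that the $H^2(\Gamma_{3,i})$ interpolation estimate applies. This relies essentially on the mesh compatibility assumption and the piecewise-flatness of $\Gamma_3$; with those in hand the argument is a direct application of the classical finite element approximation theory cited as \cite{AH2003, BS2008, Ci1978}, and nothing further is needed.
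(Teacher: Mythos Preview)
Your proposal is correct and follows exactly the approach the paper indicates: the paper simply states that one applies standard finite element approximation theory (citing \cite{AH2003, BS2008, Ci1978}) to bound the infimum term in \eqref{NA29a}, and you have spelled out precisely that argument by choosing $\bv^h_n=\Pi^h\bw_n$ and invoking the $H^2(\Omega)$ and $H^2(\Gamma_{3,i})$ interpolation estimates. The bookkeeping you flag concerning $\Pi^h\bw_n\in V^h$ and the identification of its boundary trace with the lower-dimensional interpolant is exactly what the mesh-compatibility and flatness hypotheses are for, so nothing is missing.
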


\section{Numerical results}\label{sec:ex}

In this section, we report some computer simulation results.   
We employ the Kelvin-Voigt type short memory viscoelastic law for the isotropic body, modified to reflect the 
damage effect on elastic properties of the body.
The viscosity operator $\mathcal{A}$ and the elasticity operator $\mathcal{B}$ are defined by
\begin{align}
\begin{split}
&\mathcal{A}(\bm{\tau})=2\,\phi\,\bm{\tau}+\xi\,\mbox{tr}(\bm{\tau})I,\qquad \bm{\tau}\in\mathbb{S}^2,\label{data1}\\
&\mathcal{B}(\bm{\tau}, \zeta) = \zeta\,(2\,\mu\,\bm{\tau} + \lambda\,\mbox{tr}(\bm{\tau})I),
\quad \bm{\tau} \in \mathbb{S}^2,\, \zeta \in [0,1],
\end{split}
\end{align}
where $I$ is the identity matrix, $\mbox{tr}$ is the trace operator on a matrix, $\mu$ and $\lambda$ are the
Lam\'{e} coefficients, whereas $\phi$ and $\xi$ represent the viscosity coeficients,  $\mu,\lambda,\phi,\xi > 0$. 
In all our simulations we take the following data
\begin{align}
\begin{split}
 &T=1,\label{data2}\\
 &\phi = \xi = 2, \quad \mu = \lambda = 4,\\
 &\bm{u}_{0}(\bm{x}) = (0,0), \quad \bm{x} \in \Omega,\\
 &j(\bm{\xi}) = 20\, \|\bm{\xi}\|, \quad \bm{\xi} \in \mathbb{R}^2,\\
 &\varphi(\bm{\tau}, \zeta) = \left \{ \begin{array}{ll}
   2\frac{1-\zeta}{\zeta} - 20\|\bm{\tau}\|^2, \quad \zeta \in [0.2, 1], \\
   8 - 20\|\bm{\tau}\|^2, \quad \zeta \in [0, 0.2), \\
  \end{array} \right. \bm{\tau} \in \mathbb{S}^2,\\
 &\kappa = 0.5.
\end{split}
\end{align}

We first demonstrate the effect of different partitions of the boundary and applied forces on the deformation of 
the body. In all cases, we show the initial configuration along with the shape of the body and damage field at the 
final time $t=1$. Choose a rectangular-shaped domain $\Omega=(0,2)\times(0,1) \subset \mathbb{R}^2$.  
For spatial discretization, we use uniform triangular partitions of $\overline{\Omega}$ and the corresponding
linear finite element spaces $\{V^h\}_h$; here $h$ represents the mesh-size such that the unit length part
$\{0\}\times [0,1]$ of the boundary is divided into $1/h$ equal size sub-intervals. For the temporal discretization, 
we use the uniform partitions of the time interval $[0,1]$ with the time step size $k=1/N$ for a positive integer $N$.
The numerical solutions correspond to the time step size $k=1/32$ and the mesh-size $h=1/32$. 

{\bf Experiment 1}: We take the following data
\begin{align*}
&\Gamma_1=\{0\}\times[0,1],\\
&\Gamma_2=([0,2]\times\{1\})\cup(\{2\}\times[0,1]))\cup([0,2]\times\{0\}),\\
&\Gamma_3=\emptyset,\\
 &\bm{f}_0(\bm{x},t) = (0,-0.2), \quad \bm{x} \in \Omega,\ t \in [0,T],\\
 &\bm{f}_2(\bm{x},t) = (0,0), \quad \bm{x} \in \Gamma_2,\ t \in [0,T].
\end{align*}
In this experiment we push the body down using a force with density $\bm{f}_0$. In Figure \ref{figOne} we observe 
that the body is curved downward. As a result of twisting forces, the damage inside the body gradually increases 
when the spatial point moves closer to $\Gamma_1$.

{\bf Experiment 2}: We change the data to
\begin{align*}
&\Gamma_1=\{0\}\times[0,1],\\
&\Gamma_2=([0,2]\times\{1\})\cup(\{2\}\times[0,1]))\cup([1,2]\times\{0\}),\\
&\Gamma_3=[0,1)\times\{0\},\\
 &\bm{f}_0(\bm{x},t) = (0,-0.8), \quad \bm{x} \in \Omega,\ t \in [0,T],\\
 &\bm{f}_2(\bm{x},t) = (0,0), \quad \bm{x} \in \Gamma_2,\ t \in [0,T].
\end{align*}
We once again push the body down, but in this case we introduce a rigid obstacle in contact with part of the body.
In Figure \ref{figTwo} we see that severe damage occurs in an area near the point $(1,0)$, which is a
corner point of the rigid foundation. There is damage also in the upper part of the body, as a result of expansion 
of the material.

{\bf Experiment 3}: In the final experiment we take
\begin{align*}
&\Gamma_1=\{0\}\times[0,1],\\
&\Gamma_2=([0,2]\times\{1\})\cup(\{2\}\times[0,1])),\\
&\Gamma_3=[0,2]\times\{0\},\\
 &\bm{f}_0(\bm{x},t) = (0,0), \quad \bm{x} \in \Omega,\ t \in [0,T],\\
 &\bm{f}_2(\bm{x},t) = (-1,-1), \quad \bm{x} \in \Gamma_2,\ t \in [0,T].
\end{align*}
In this case the entire bottom part of the body is in contact with a rigid obstacle, and we push the body down and 
to the left using a force with density $\bm{f}_2$.  In Figure \ref{figThree} we see that as a result of the action
of the boundary force, an increased amount of damage is observed towards the upper and right side of the body. 
It is also interesting to examine the effect of the frictional force on the interface $\Gamma_3$ between the foundation
and the body; the frictional force prevents the body from moving further to the left.

We find that simulation results from these experiments agree with our physical tuition.

\begin{figure}[ht]
\centering
\begin{minipage}{.49\textwidth}
  \centering
    \includegraphics[width=1.0\linewidth]{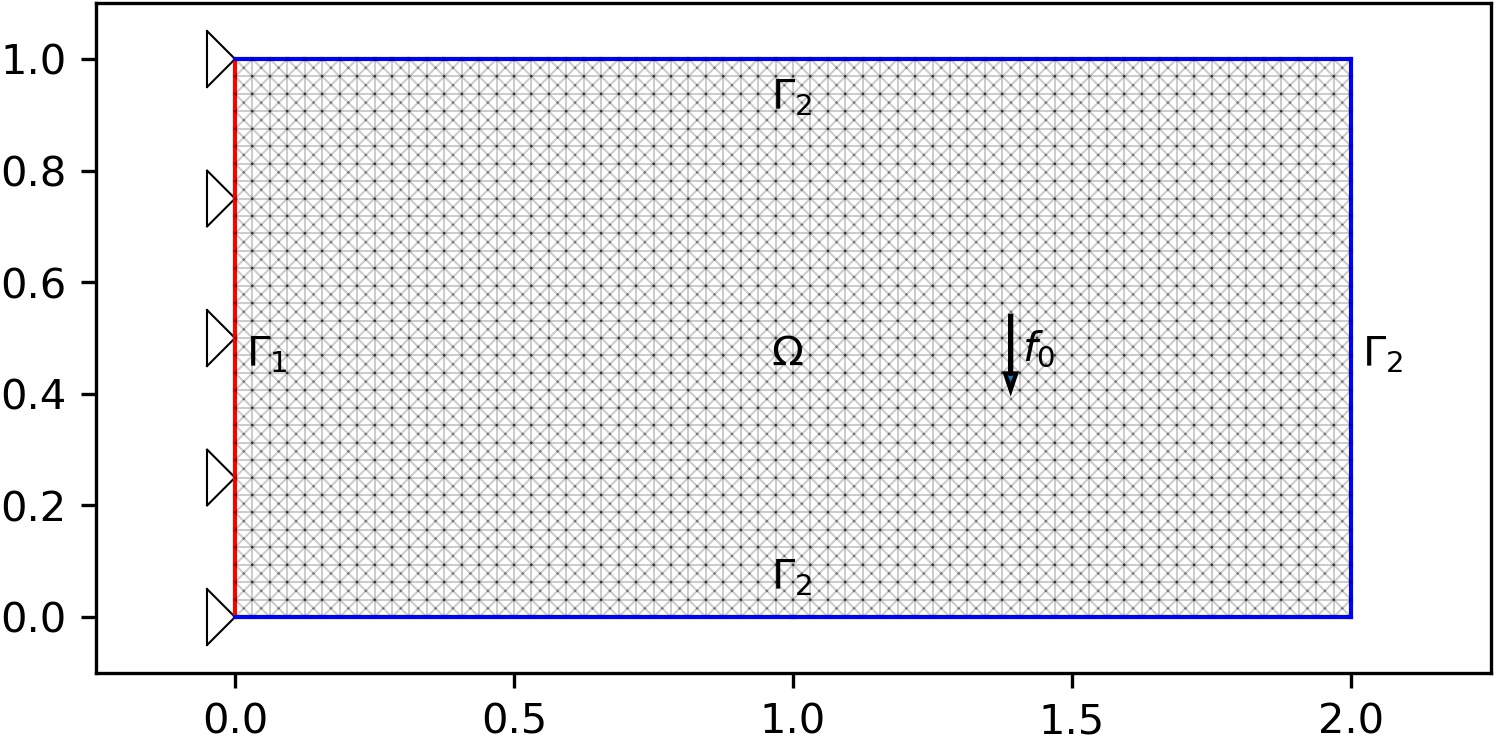}
\end{minipage}
\begin{minipage}{.49\textwidth}
  \centering
    \includegraphics[width=0.9\linewidth]{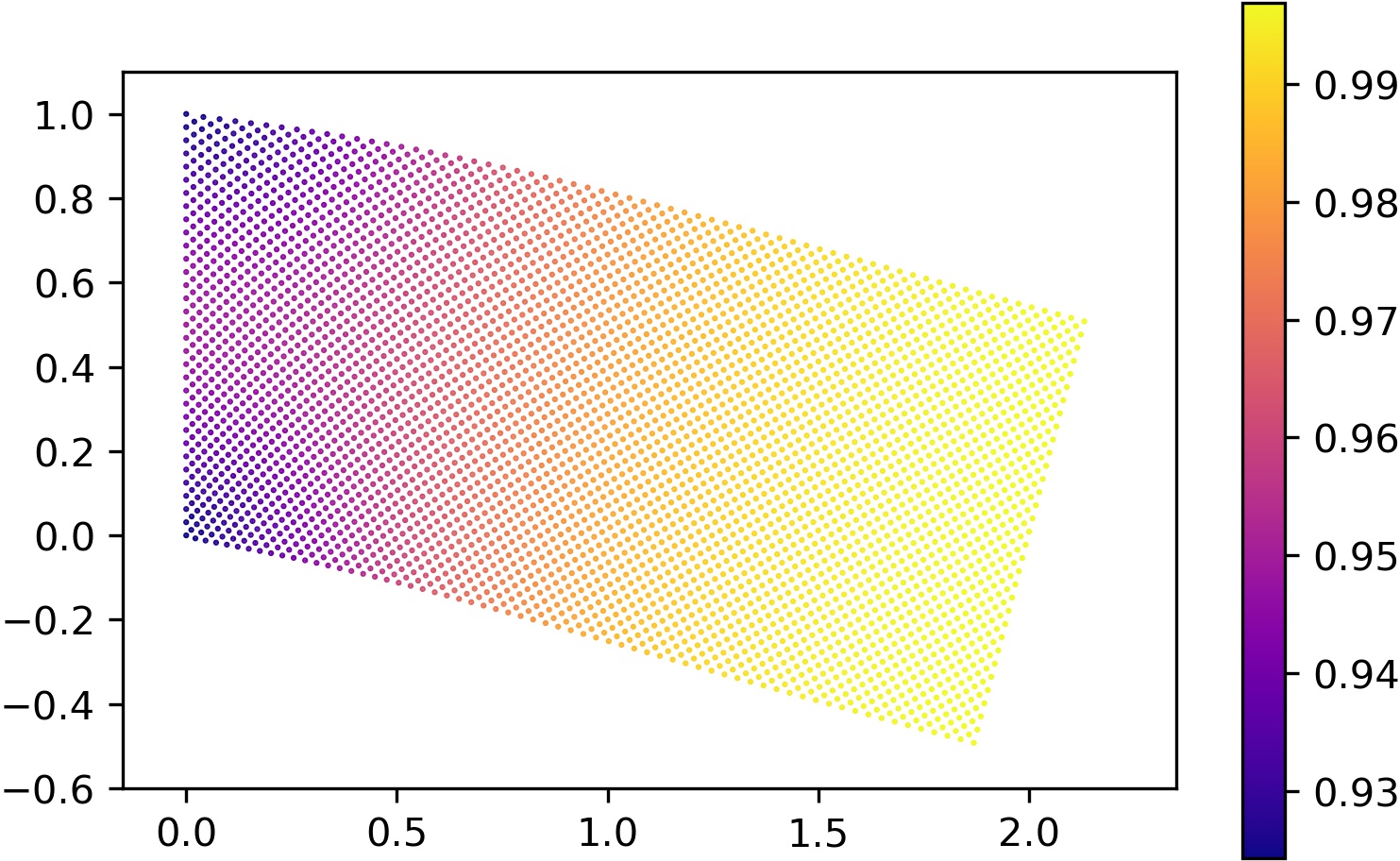}
\end{minipage}
 \caption{Initial configuration, body position and damage in first experiment} 
 \label{figOne}
\end{figure}

\begin{figure}[ht]
\centering
\begin{minipage}{.49\textwidth}
  \centering
    \includegraphics[width=1.0\linewidth]{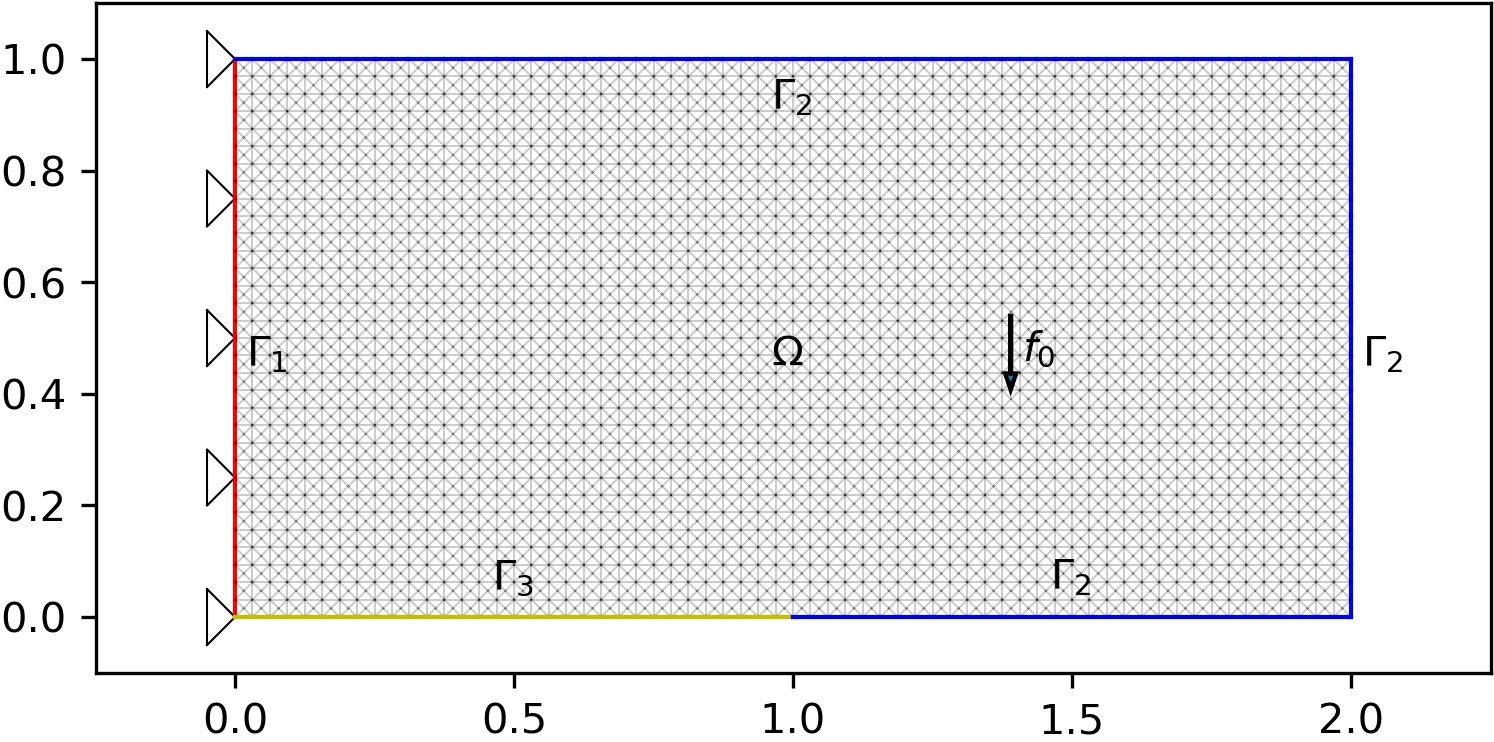}
\end{minipage}
\begin{minipage}{.49\textwidth}
  \centering
    \includegraphics[width=0.9\linewidth]{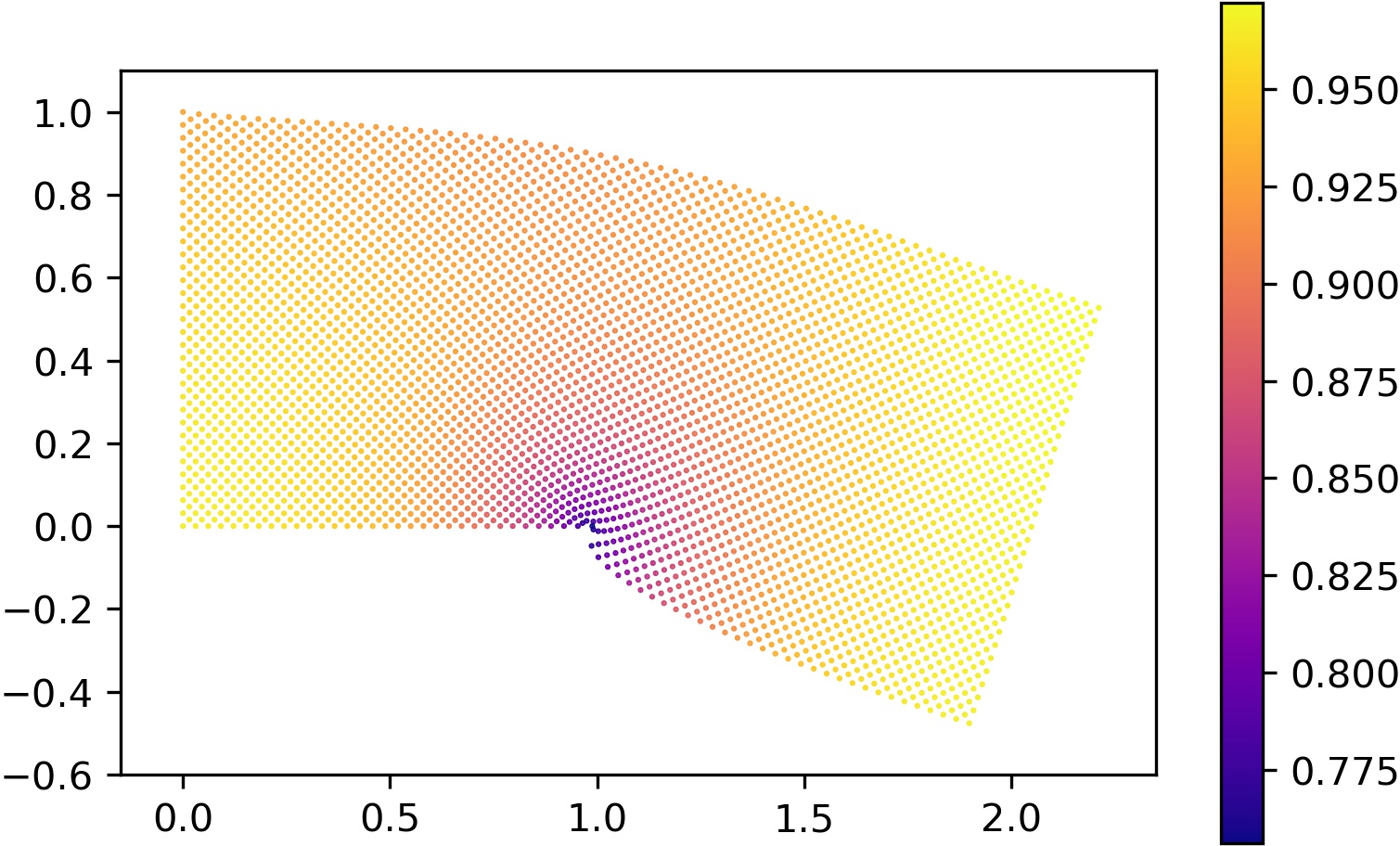}
\end{minipage}
 \caption{Initial configuration, body position and damage in second experiment} 
    \label{figTwo}
\end{figure}

\begin{figure}[ht]
\centering
\begin{minipage}{.49\textwidth}
  \centering
    \includegraphics[width=1.0\linewidth]{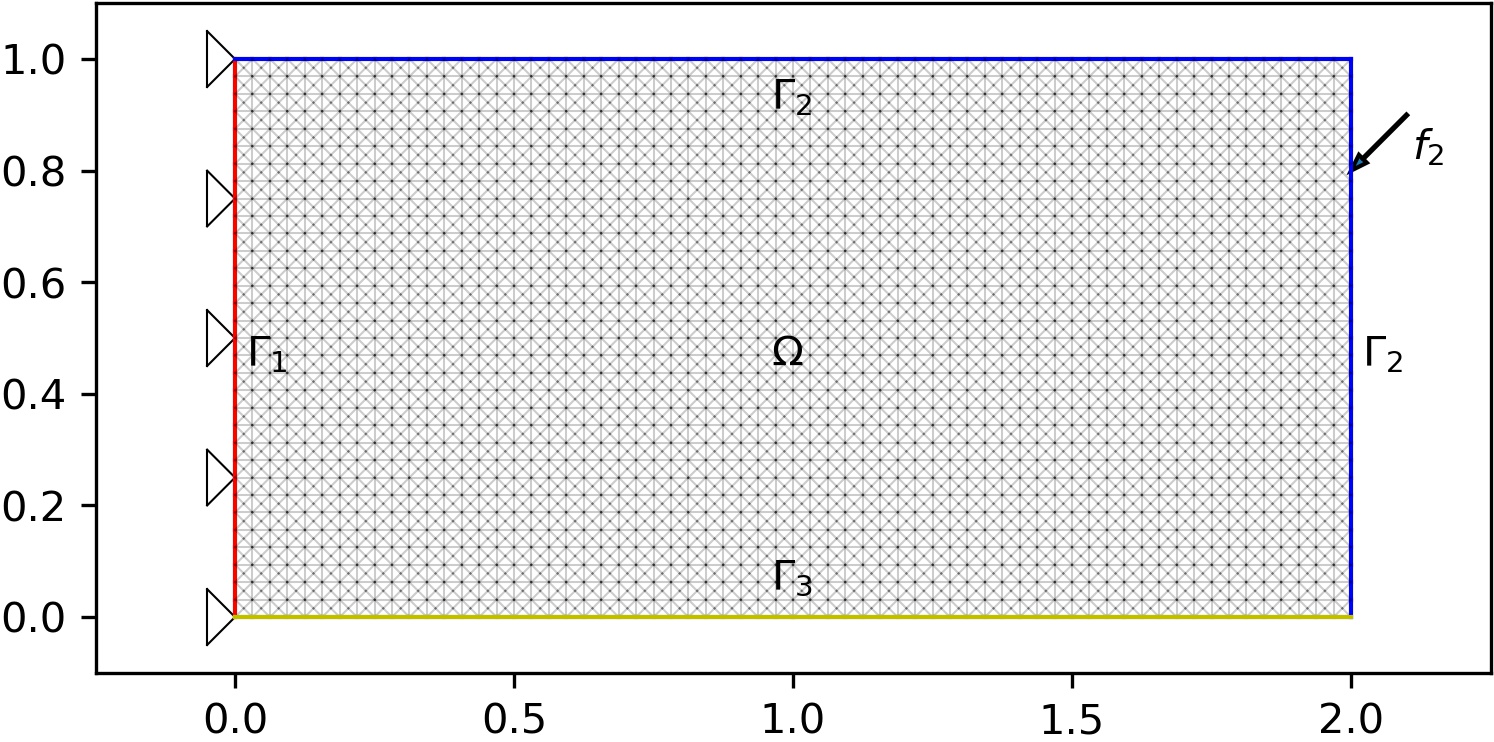}
\end{minipage}
\begin{minipage}{.49\textwidth}
  \centering
    \includegraphics[width=0.9\linewidth]{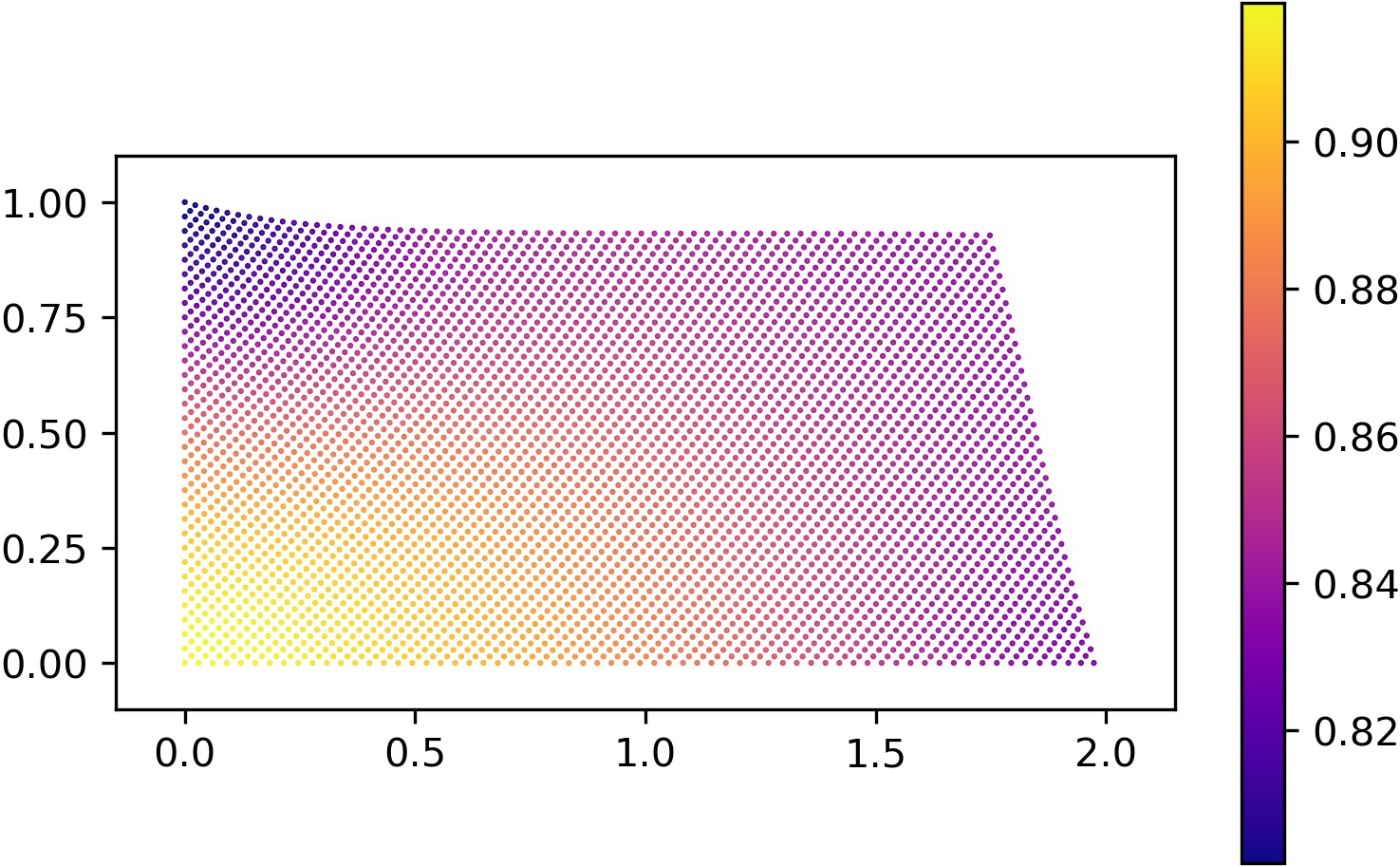}
\end{minipage}
 \caption{Initial configuration, body position and damage in third experiment} 
    \label{figThree}
\end{figure}

\begin{table}[ht]
\footnotesize
\centering
\begin{tabular}{ l r r r r r r }\hline
$h$ & $1/2$ & $1/4$ & $1/8$ & $1/16$ & $1/32$  \\ \hline
$\|\bm{w}-\bm{w}^{hk}\|_V/\|\bm{w}\|_V$ & $3.4889e-1$ & $2.2027e-1$ & $1.2745e-1$  &  $7.2811e-2$ & $3.9967e-2$ \\ 
{\rm Convergence order} & & 0.6635 &  0.7893 &  0.8077 &  0.8653 \\ \hline
$\|\zeta-\zeta^{hk}\|_V/\|\zeta\|_V$ & $8.7595e-2$ & $4.3672e-2$ & $1.7248e-2$ &  $6.1124e-3$ & $2.1316e-3$ \\ 
{\rm Convergence order} & & 1.0041 &  1.3402 &  1.4966 &  1.5198\\ \hline
\end{tabular}
\caption{Numerical errors vs.\ $h$ with fixed $k = 1/128$} \label{tabOne}
\end{table}

\begin{table}[ht]
\footnotesize
\centering
\begin{tabular}{ l r r r r r r }\hline
$k$ & $1/2$ & $1/4$ & $1/8$ & $1/16$ & $1/32$  \\ \hline
$\|\bm{w}-\bm{w}^{hk}\|_V/\|\bm{w}\|_V$ & $8.5393e-2$ & $6.8195e-2$ & $3.2105e-2$  &  $1.4883e-2$ & $7.0551e-3$ \\ 
{\rm Convergence order} & & 0.3244 &  1.0868 &   1.1091 &   1.0766 \\ \hline
$\|\zeta-\zeta^{hk}\|_V/\|\zeta\|_V$ & $4.0107e-1$ & $5.6124e-2$ & $2.2501e-2$  &  $8.4583e-3$ & $4.3778e-3$ \\ 
{\rm Convergence order} & & 2.8372 &  1.3186 &   1.4115 &   0.9502 \\ \hline
\end{tabular}
\caption{Numerical errors vs.\ $k$ with fixed $h = 1/128$} \label{tabTwo}
\end{table}

\begin{figure}[ht]
\centering
\begin{minipage}{.45\textwidth}
  \centering
  \includegraphics[width=0.9\linewidth]{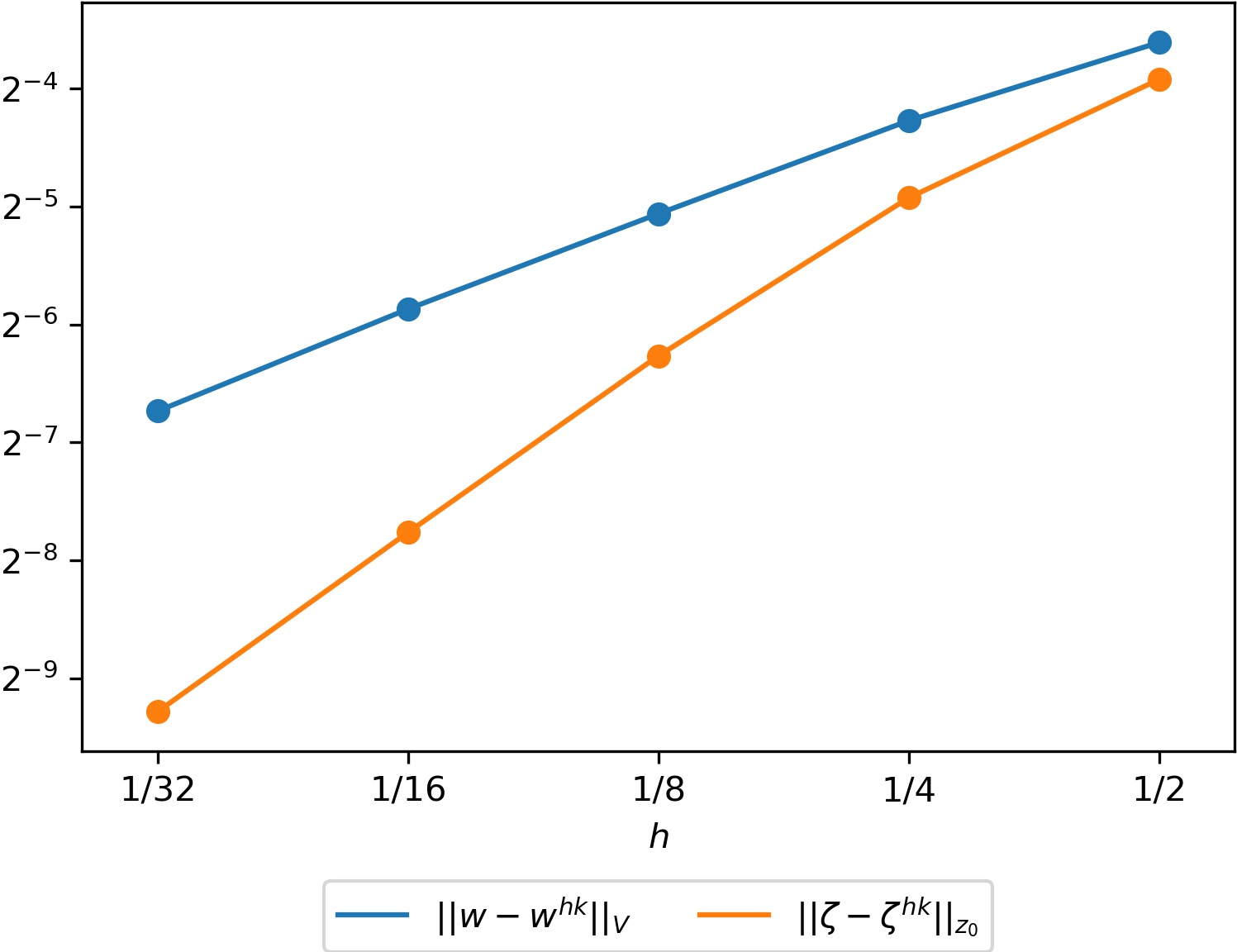}
  \caption{Errors vs.\ $h$ with fixed $k=1/128$} \label{figFour}
\end{minipage}
\begin{minipage}{.45\textwidth}
  \centering
  \includegraphics[width=0.9\linewidth]{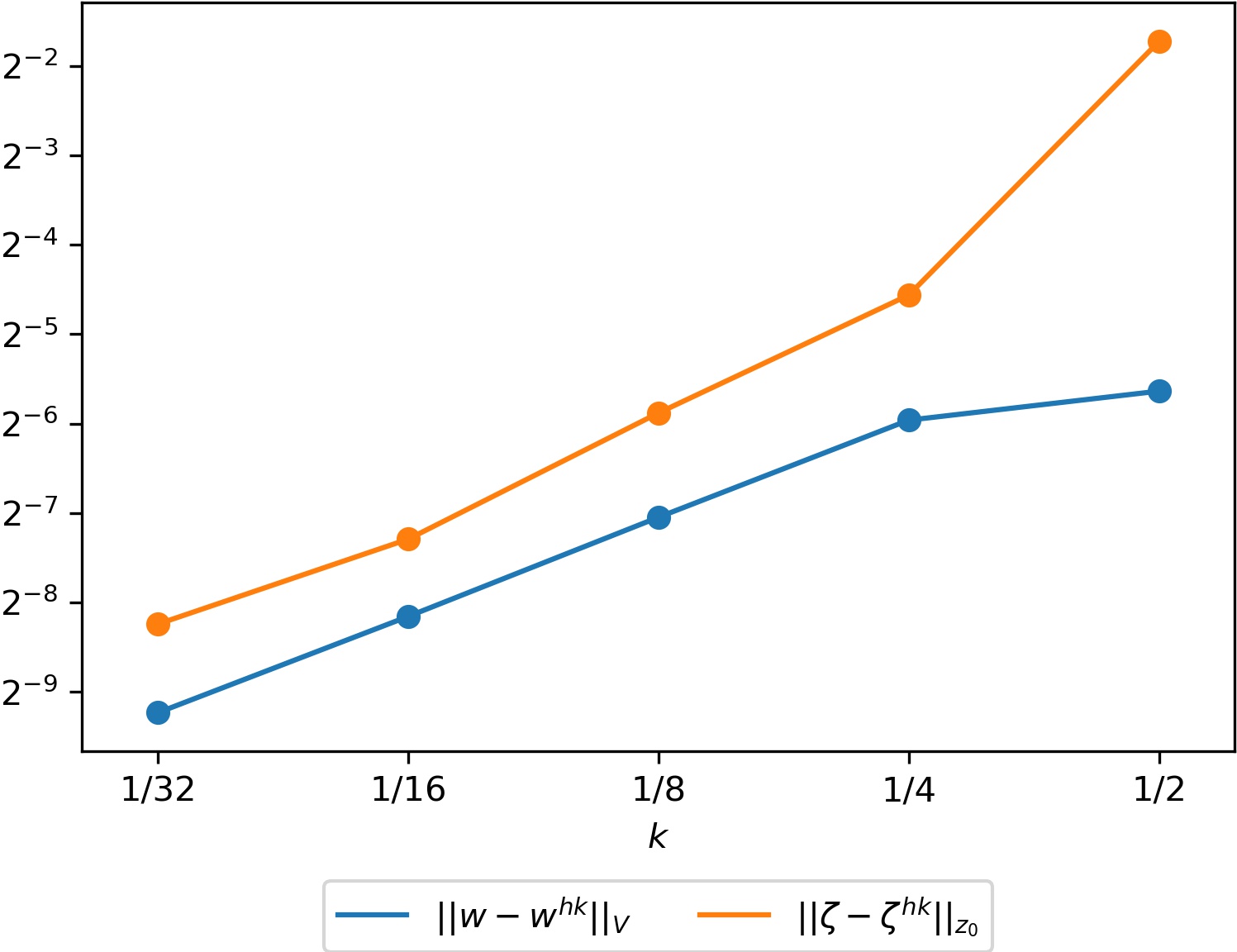}
  \caption{Errors vs.\ $k$ with fixed $h=1/128$} \label{figFive}
\end{minipage}
\end{figure}

We now turn to explore the convergence orders of the numerical solutions of a model problem. We take a square-shaped domain
$\Omega=(0,1)\times(0,1)$, use the same data as in (\ref{data1}) and (\ref{data2}) and
\begin{align*}
&\Gamma_1=\{0\}\times[0,1],\\
&\Gamma_2=([0,1]\times\{1\})\cup(\{1\}\times[0,1])),\\
&\Gamma_3=[0,1]\times\{0\},\\
 &\bm{f}_0(\bm{x},t) = (0,0), \quad \bm{x} \in \Omega,\ t \in [0,T],\\
 &\bm{f}_2(\bm{x},t) = (-1.4,-0.2), \quad \bm{x} \in \Gamma_2,\ t \in [0,T].
\end{align*}
We use uniform triangulations of the spatial domain and uniform partitions of the time interval, and 
let $h$ and $k$ be the spatial mesh-size and time step-size as defined above. We present a comparison 
of numerical errors $\|\bm{w} - \bm{w}^{hk}\|_V$ and  $\|\zeta - \zeta^{hk}\|_{Z_0}$  computed for a sequence 
of numerical solutions. The numerical solution corresponding to $h = 1/128$ and $k = 1/128$ is taken as the ``true''
solution $\bm{w}$ and $\zeta$ in computing the numerical errors; $\|\bm{w}\|_{V}\doteq 0.23525$ and
$\|\zeta\|_{Z_0}\doteq 0.75375$.

First, we fix $k = 1/128$ and start with $h = 1/2$, which is successively halved. The results are presented 
in Table \ref{tabOne} and Figure \ref{figFour}, where the dependence of the relative error estimates 
$\|\bm{w}  - \bm{w}^{hk}\|_V$  and $\|\zeta - \zeta^{hk}\|_{Z_0}$ with respect to $h$ are plotted on a log-log scale. 
Asymptotic convergence orders close to one for the velocity variable and slightly higher for the damage variable
can be observed for the numerical solutions.

Then, we fix $h = 1/128$ and start with $k = 1/2$, which is successively halved. The results are presented 
in Table \ref{tabTwo} and Figure \ref{figFive}. Asymptotic convergence orders close to one for both unknowns 
can be observed.

\medskip
\noindent {\bf Acknowledgments}.  
The project has received funding from the European Union's Horizon 2020 research 
and innovation programme under the Marie Sk{\l}odowska-Curie grant agreement No.\ 823731 CONMECH.

\end{document}